\documentclass[a4paper,12pt]{amsart}
\usepackage[utf8]{inputenc}
\usepackage[dvips]{graphicx}
\usepackage[shortlabels]{enumitem}
\usepackage{amsmath,amssymb,color,enumitem,graphics,hyperref,latexsym,vmargin,yfonts}
\usepackage{mathtools}
\usepackage[lf]{Baskervaldx} 

\mathtoolsset{showonlyrefs}

\hypersetup{
    unicode=false,
    pdftoolbar=true,
    pdfmenubar=true,
    pdffitwindow=false,
    pdfstartview={FitH},
    pdfnewwindow=true,
    colorlinks=true,
    linkcolor=black,
    citecolor=black,
    filecolor=magenta,
    urlcolor=cyan
}

\def\cB{{\mathcal{B}}}
\def\C{{\mathbb{C}}}
\def\D{{\mathbb{D}}}
\def\cH{{\mathcal{H}}}
\def\T{{\mathbb{T}}}
\def\Z{{\mathbb{Z}}}
\def\N{{\mathbb{N}}}
\def\bP{{\mathbf{P}}}

\DeclareRobustCommand{\rchi}{{\mathpalette\irchi\relax}}
\newcommand{\irchi}[2]{\raisebox{\depth}{$#1\chi$}} 

\newcommand{\Hardy}{H^2(\mathbb{T}^N)}
\newcommand{\norm}[1]{\left\lVert#1\right\rVert}

\newtheorem{theorem}{Theorem}[section]
\newtheorem{lemma}[theorem]{Lemma}
\newtheorem{proposition}[theorem]{Proposition}

\theoremstyle{definition}

\newtheorem{remark}[theorem]{Remark}
\newtheorem{example}[theorem]{Example}

\begin{document}

\title{Frames of orbits of multiplication operators on Hardy spaces}

\author[A. Aguilera, D. Carando]{Alejandra Aguilera \and Daniel Carando}
\address{Departamento de Matem\'atica, Facultad de Ciencias Exactas y Naturales, Universidad de Buenos Aires and Instituto de Matem\'atica ``Luis Santal\'o'' (CONICET-UBA), Buenos Aires, Argentina.}
\email{aaguilera@dm.uba.ar}
\email{dcarando@dm.uba.ar}

\let\thefootnote\relax\footnote{2020 {\em Mathematics Subject Classification:} Primary 42C15, 47B35, 30H10, 46E22. {\em Keywords:} Dynamical sampling, frame theory, Hardy spaces, multiplication operators, reproducing kernel.}

\begin{abstract}

We study frames for Hardy spaces generated by orbits of multiplication operators.  We characterize the symbols $\varphi \in H^\infty(\mathbb{T}^N)$ for which the multiplication operator $M_\varphi$ admits a frame of orbits  on $H^2(\mathbb{T}^N)$.
 We also show that, in this 
setting, the existence of a frame is equivalent to the existence of a 
Parseval frame.  Moreover, for $N=1$ we prove that finitely many 
orbits suffice if and only if $\varphi$ is a finite Blaschke product. For $N > 1$, no finite 
collection of orbits can generate a frame, regardless of the symbol. We study the analogous problem for the adjoint operator $M_\varphi^*$. Our results extend to the infinite-dimensional torus $\mathbb{T}^\infty$ 
and, via Bohr's transform, to the Hardy space of Dirichlet series 
$\mathcal{H}_2$.
\end{abstract}

\maketitle

\section*{Introduction}

{Motivated by applications in dynamical sampling, signal processing, and the theory of shift-invariant systems, recent research has focused on frames generated by the iterative action of a bounded operator on a Hilbert space (see e.g. \cite{ACCMP17,ADK13,ADK15,ACMT,AP17}).}

Given a bounded linear operator 
$T$ on a separable Hilbert space $\mathcal{H}$ and a countable collection of vectors 
$\{f_i\}_{i \in I} \subseteq \mathcal{H}$, one seeks conditions under which 
the system of \emph{orbits}
$$\{T^n f_i\}_{i \in I,n \geq 0}$$
constitutes a frame for $\mathcal{H}$ (see, for example,  \cite{ ACNP25, AP17, CMPP}).

In this paper, we study the case where $T$ is a multiplication 
operator $M_\varphi$ (or its adjoint $M_\varphi^*$) acting on Hardy spaces. 
Specifically, given $N \in \mathbb{N}$ and a symbol 
$\varphi \in H^\infty(\mathbb{T}^N)$, we study when the system 
$\{M_\varphi^n f_i\}_{i \in I, n \geq 0}$ forms a frame for 
$H^2(\mathbb{T}^N)$. Subspaces of Hardy spaces
on the one-dimensional torus proved very useful as models for general Hilbert spaces in the study of dynamical frames \cite{ACCP23,CMS21,CMS,CHP}. This fact motivated us to investigate the dynamical frame problem directly within these spaces. Moreover, multiplication operators are  fundamental operators in function spaces and, in particular, in Hardy spaces. 

We remark that, in \cite{KasShu19}, the authors show that we cannot expect to have an orbit of the multiplication operator to be a frame of  $L^2(a,b)$. The situation is different in $H^2(\T)$, since   $\{z^n\}_{n\ge 0} $  is an orthonormal basis, but similar in $H^2(\T^N)$ for $N>1$. Our results, in particular, answer the main question posed in \cite{Ches}.

Our first main result (Theorem~\ref{frame_varphi<=1}) characterizes those symbols $\varphi \in H^\infty(\mathbb{T}^N)$ for 
which $M_\varphi$ admits a frame of orbits (with possibly infinitely many 
generators): this occurs if and only if $\|\varphi\|_\infty \leq 1$ and 
 $|\widehat{\varphi}(0)| < 1$. It is interesting to note that, in this context, admiting Parseval 
frames or general frames turn out to be equivalent. 

Another natural question concerns the { number} of orbits 
required to generate a frame. Here, we find a dichotomy between the one-dimensional and the multidimensional settings. For $N = 1$, we prove 
(Theorem~\ref{teo-frames-N=1}) that we can have finitely many orbits if and 
only if $\varphi$ is a finite Blaschke product; in this case the operator even admits an orthonormal basis which can be obtained by taking  the Malmquist--Walsh basis as the set of generators. In  contrast, for $N > 1$ 
 no finite collection of orbits can 
generate a frame, regardless of the symbol (Proposition~\ref{prop-no-finitos}). 

We also address the question on the adjoint operator $M_\varphi^*$ admiting frames of orbits. In 
Theorem~\ref{teo-phi1}, we show that $M_\varphi^*$ admits a frame of 
orbits if and only if $|\varphi(\zeta)| < 1$ almost everywhere on 
$\mathbb{T}^N$, and that in this case the index set $I$ must necessarily 
be infinite. For adjoint operators there is no difference between one-dimensional and multidimensional cases.

Finally, in Section~4 we extend our results to the Hardy space of Dirichlet 
series $\mathcal{H}_2$ via Bohr's transform, which provides an isometric 
isomorphism between the  space $\mathcal{H}_2$ of Dirichet series and $H^2(\mathbb{T}^\infty)$. The characterization of those Dirichlet series 
$D_0 \in \mathcal{H}_\infty$ for which the multiplication operator 
$M_{D_0}$ on $\mathcal{H}_2$ admits a frame of orbits is given in 
Theorem~\ref{frame_D_0}.

The paper is organized as follows. Section~2 recalls background on frames 
in Hilbert spaces, Hardy spaces on the polydisc and torus, and reproducing 
kernel Hilbert spaces. Section~3 contains our main results for $M_\varphi$ 
and $M_\varphi^*$ on $H^2(\mathbb{T}^N)$. Section~4 presents the Dirichlet 
series setting.

\section{Preliminaries}

\subsection{Frames in Hilbert spaces} Let $\Omega$ a countable index set.
A sequence $\{x_j\}_{j
\in \Omega}$ is a \emph{frame} for a Hilbert space $\cH$ if there exist constants $A,B>0$ such that for all $y\in \cH$ the following inequality holds
\begin{equation*}
    A\|y\|^2 \leq \sum_{j\in \Omega} |\langle y,x_j
    \rangle|^2 \leq B \|y\|^2.
\end{equation*}

We call $\{x_j\}_{j
\in \Omega}$ a \emph{Parseval frame} if we can choose $A=B=1$, i.e., if for all $y\in \cH$ we have
\begin{equation*}
    \sum_{j\in \Omega} |\langle y,x_j
    \rangle|^2 = \|y\|^2.
\end{equation*}

The \emph{synthesis operator} or the \emph{reconstruction operator} associated to the frame $\{x_j\}_{j\in \Omega}$ is the map (usually) defined as $R:\ell^2(\Omega) \to \cH$,
\begin{equation*}
    R(\{a_j\}) = \sum_{i\in \Omega} a_j x_j.
\end{equation*}

Throughout the article, 
we say that an operator $T \in \cB(\cH)$ \emph{admits a frame of orbits} if there exists a countable set $\{x_i\}_{i\in I} \subseteq \cH$ such that $\{T^n x_i\}_{i\in I,n\geq0}$ is a frame of $\cH$.

We now summarize  some known related results. Recall that an operator $T\in \cB(\cH)$ is \emph{strongly stable} if $\|T^{n}y\| \to 0$ as $n\to \infty$ for all $y\in \cH$. We refer the reader to  \cite{ACNP25,AP17,CMPP} for the proofs of the following (and related) results. 

\begin{theorem}\label{TadmitFrame}
Let $T \in \cB(\cH)$. The following conditions are equivalent:
\begin{enumerate}
\item[i)]  $T$ admits a frame,
\item[ii)] $T$ is similar to a contraction, and $T^*$ is strongly stable.
\end{enumerate}
\end{theorem}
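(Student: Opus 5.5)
We prove the two implications separately, with the synthesis operator $R$ of the frame as the main tool. We read ``$T$ admits a frame'' as ``$T$ admits a frame of orbits'', as in the definition above.

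\emph{(i) $\Rightarrow$ (ii).} Suppose $\{T^n x_i\}_{i\in I,n\ge0}$ is a frame with bounds $A,B$. Let $R:\ell^2(I\times\N_0)\to\cH$ be its synthesis operator, $R(e_{i,n})=T^nx_i$. Then $R$ is bounded and onto, and its adjoint $R^*y=\{\langle y,T^nx_i\rangle\}$ is bounded below by $\sqrt A$. Let $S$ be the shift on $\ell^2(I\times\N_0)$ given by $S e_{i,n}=e_{i,n+1}$; it is an isometry. By construction $TR=RS$, and hence $R^*T^*=S^*R^*$.

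Strong stability of $T^*$ comes from the tail of the frame sum. We have
\[
\|T^{*k}y\|^2\le A^{-1}\sum_{i,n}|\langle T^{*k}y,T^nx_i\rangle|^2=A^{-1}\sum_{i}\sum_{n\ge k}|\langle y,T^nx_i\rangle|^2 .
\]
This is the tail of a convergent series, so it tends to $0$ as $k\to\infty$.

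For similarity to a contraction, let $K=\ker R$, which is $S$-invariant because $TR=RS$. Then $R$ induces an isomorphism $\tilde R:\ell^2/K\to\cH$ that intertwines $T$ with the compression $\tilde S$ of $S$ to $\ell^2\ominus K\cong \ell^2/K$. Since $\tilde S$ is a contraction, $T=\tilde R\tilde S\tilde R^{-1}$ is similar to a contraction. A cleaner equivalent route is to define the new inner product $\langle y,z\rangle'=\langle R^{*}y,R^{*}z\rangle$. This is equivalent to the original one because $R^*$ is bounded above and below. Then $\|T^*y\|'^2=\sum_{i,n\ge1}|\langle y,T^nx_i\rangle|^2\le\|y\|'^2$, so $T^*$, and hence $T$, is similar to a contraction.

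\emph{(ii) $\Rightarrow$ (i).} Write $T=V C V^{-1}$ with $\|C\|\le1$. Then $C^*=V^*T^*V^{*-1}$ is also strongly stable. Suppose we find a Parseval frame of orbits $\{C^n g_i\}$ for $C$. Its image under $V$ is the system $\{T^n Vg_i\}$, which is a frame because $V$ is an invertible bounded operator. So it suffices to treat a contraction $C$ with $C^*$ strongly stable.

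Let $D=(I-CC^*)^{1/2}\ge0$ and take $\{g_i\}$ to be $D$ applied to an orthonormal basis $\{e_i\}$ of $\cH$. Then
\[
\sum_i|\langle y,C^ng_i\rangle|^2=\|DC^{*n}y\|^2=\|C^{*n}y\|^2-\|C^{*(n+1)}y\|^2 .
\]
Summing over $n$ telescopes to $\|y\|^2-\lim_k\|C^{*k}y\|^2=\|y\|^2$, where the limit vanishes by strong stability of $C^*$. Hence this is a Parseval frame.

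I expect the main obstacle to be the similarity statement in (i)$\Rightarrow$(ii). The renormed inner product via $R^*$ handles it directly, so the key check is that this inner product is equivalent to the original, which follows from both frame bounds.
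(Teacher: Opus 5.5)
Your proof is correct in both directions. The paper gives no proof of its own here; it only defers to \cite{ACNP25,AP17,CMPP}. Your argument is the standard one from that literature:
\begin{itemize}
\item the tail estimate gives strong stability of $T^*$;
\item your renorming amounts to the inequality $TGT^*\le G$ for the frame operator $G=RR^*$, which says exactly that $G^{-1/2}TG^{1/2}$ is a contraction (this is the explicit similarity behind $\langle y,z\rangle'=\langle Gy,z\rangle$);
\item the defect-operator vectors $(I-CC^*)^{1/2}e_i$, together with the telescoping sum, give the converse.
\end{itemize}

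The same computations also prove Theorem~\ref{thm:PF_orbits}, which the paper uses alongside this one. For a Parseval frame $G=I$, so $TT^*\le I$. Conversely, your construction in (ii)$\Rightarrow$(i) already produces a Parseval frame when $T$ itself is a contraction.
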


For Parseval frames we have the following.
\begin{theorem}\label{thm:PF_orbits}
Let $T \in \cB(\cH)$. The following conditions are equivalent:
\begin{enumerate}
\item[i)]  $T$ admits a frame a Parseval frame,
\item[ii)] $T$ is a contraction and $T^*$ is strongly stable.
\end{enumerate}
\end{theorem}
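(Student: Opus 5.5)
We prove both implications directly, writing the frame condition in terms of the analysis operator. If $\{T^n x_i\}_{i\in I,n\ge 0}$ is a frame, then for every $y\in\cH$
\begin{equation*}
\sum_{i\in I}\sum_{n\ge0}|\langle (T^*)^n y, x_i\rangle|^2 .
\end{equation*}
Set $Cy=\{\langle y,x_i\rangle\}_{i\in I}\in\ell^2(I)$; this gives a bounded map $C:\cH\to\ell^2(I)$ satisfying
\begin{equation*}
\sum_{n\ge0}\|C(T^*)^n y\|^2=\|y\|^2 .
\end{equation*}

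\textbf{(i)$\Rightarrow$(ii).} Replace $y$ by $T^*y$ in the Parseval identity. This gives
\begin{equation*}
\|T^*y\|^2=\|y\|^2-\|Cy\|^2\le\|y\|^2 ,
\end{equation*}
so $T^*$ is a contraction, and hence so is $T$. Applying the identity to $(T^*)^k y$ gives
\begin{equation*}
\|(T^*)^k y\|^2=\sum_{n\ge k}\|C(T^*)^n y\|^2 .
\end{equation*}
The right-hand side is the tail of a convergent series, so it tends to $0$. Thus $T^*$ is strongly stable.

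\textbf{(ii)$\Rightarrow$(i).} Let $D=(I-TT^*)^{1/2}$, which is a positive operator because $T$ is a contraction. The identity
\begin{equation*}
\|Dz\|^2=\|z\|^2-\|T^*z\|^2
\end{equation*}
telescopes along $z=(T^*)^n y$:
\begin{equation*}
\sum_{n=0}^{k-1}\|D(T^*)^n y\|^2=\|y\|^2-\|(T^*)^k y\|^2 .
\end{equation*}
By strong stability the right-hand side tends to $\|y\|^2$ as $k\to\infty$. Now choose an orthonormal basis $\{e_i\}_{i\in I}$ of $\cH$, which is countable since $\cH$ is separable, and set $x_i=De_i$. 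Using $D=D^*$,
\begin{equation*}
\sum_i|\langle (T^*)^n y, De_i\rangle|^2=\sum_i|\langle D(T^*)^n y,e_i\rangle|^2=\|D(T^*)^n y\|^2 .
\end{equation*}
Summing over $n$ gives $\|y\|^2$, so $\{T^n De_i\}_{i\in I,n\ge0}$ is a Parseval frame.

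No step is a serious obstacle. The one point needing care is choosing $D=(I-TT^*)^{1/2}$, the defect operator of $T^*$, and noticing that the orbit inner products $\langle y, T^n x_i\rangle$ are really inner products involving $(T^*)^n y$. Once that is in place, both directions follow from the same telescoping identity.
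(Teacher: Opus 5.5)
Your proof is correct: the Parseval identity at $T^*y$ and at $(T^*)^k y$ gives contractivity and strong stability, and the telescoping identity for $D=(I-TT^*)^{1/2}$ shows that $\{T^nDe_i\}$ is a Parseval frame. One small fix: your first display is truncated and should read $\sum_{i}\sum_{n}|\langle (T^*)^n y,x_i\rangle|^2=\|y\|^2$, for a \emph{Parseval} frame. The paper gives no proof of this statement, citing \cite{ACNP25,AP17,CMPP}, and your defect-operator argument is essentially the standard one from that literature.
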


\begin{remark}\label{ss-unitarynoframe}

\noindent
\begin{enumerate} 

\item \label{lem:ss_bon} 
For power bounded operator, the strong stability is determined by the behaviour of the orbits of an orthonormal basis. More precisely, if  $\cH$ is a Hilbert space with orthonormal basis $\{e_j\}$ and $T \in B(\cH)$ is a power bounded  operator, then $T$ is strongly stable if and only if $\norm{T^n e_j} \to 0$ as $n \to +\infty$ for all $j \in \N$.

\item  As a consequence of Theorems \ref{TadmitFrame} and \ref{thm:PF_orbits} we get that unitary operators do not admit frames of orbits. This fact can also be derived from \cite[Proposition~2.1]{CH23}.
\end{enumerate}
\end{remark}

For the sake of completeness, we recall a sufficient condition for an operator to admit a frame with finitely many orbits. This was proved in \cite[Theorem 3.9]{CMPP}.

\begin{theorem}\label{finiteorbits-spec}
    Let $I$ be a finite set and assume that $\{T^n f_i\}_{i\in I, n\geq 0}$ is a frame for $\cH$. Then ${\rm codim(rank}(T^* - \lambda)) = 0$ for all $\lambda \in \D\setminus\Delta$, where $\Delta \subset \D$ is discrete in $\D$. In particular, either $\sigma(T^*) = \overline{\D}$ and $\sigma_p(T^*)=\D$, or $\sigma(T^*)$ is discrete in $\D$.
\end{theorem}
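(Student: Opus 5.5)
The plan is to convert the frame property into an intertwining with a vector-valued backward shift, and then read off the spectral information from the Beurling--Lax structure of shift-invariant subspaces.

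\textbf{Step 1 (intertwining).} Put $m=|I|<\infty$ and let $C:\cH\to \ell^2(\N_0)\otimes\C^m\cong H^2(\T)\otimes\C^m$ be the analysis operator
\[
Cy=\bigl(\langle T^{*n}y,f_i\rangle\bigr)_{n\ge0,\,i\in I},
\]
identifying each sequence with the power series $\sum_n a_n z^n$. The frame inequalities say exactly that $C$ is bounded and bounded below. Hence $M=C\cH$ is closed and $C:\cH\to M$ is an isomorphism. A direct check gives $CT^*=(S^*\otimes I_m)C$, where $S$ is the shift on $H^2$. Therefore $M$ is invariant under $S^*\otimes I_m$, and $T^*$ is similar to $(S^*\otimes I_m)|_M$. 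It thus suffices to prove the statement for $A:=(S^*\otimes I_m)|_M$.

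\textbf{Step 2 (Beurling--Lax).} Since $M$ is $S^*$-invariant, $M^\perp$ is $S$-invariant. By the Beurling--Lax theorem, $M^\perp=\Theta H^2(\C^k)$ for some inner function $\Theta:\D\to\cB(\C^k,\C^m)$ with $k\le m$. I would take a maximal $r\le k$ for which some $r\times r$ minor of $\Theta$ is not identically zero, and let
\[
\Delta=\{\lambda\in\D:\ \text{all such } r\times r \text{ minors vanish at }\lambda\}.
\]
This set is discrete, being the zero set of a nonzero analytic function. Off $\Delta$, $\operatorname{rank}\Theta(\lambda)=r$ is maximal.

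\textbf{Step 3 (surjectivity of $A-\lambda$ off $\Delta$).} Fix $\lambda\in\D\setminus\Delta$ and $g\in M$. Since $S-\bar\lambda$ is bounded below, $S^*-\lambda$ is onto $H^2(\C^m)$. Hence there is $h_0$ with $(S^*-\lambda)h_0=g$, and every solution has the form $h_0+v\,k_\lambda$ with $v\in\C^m$ and $k_\lambda(z)=(1-\lambda z)^{-1}$. I must choose $v$ so that $h_0+vk_\lambda\in M$, i.e.\ $P_{M^\perp}h_0=-P_{M^\perp}(vk_\lambda)$.

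The reproducing property gives $P_{\Theta H^2}(vk_\lambda)=\Theta\,\Theta(\bar\lambda)^*v\,k_\lambda$, up to the conjugation convention I fix for $k_\lambda$. On the other side, $u=P_{M^\perp}h_0=\Theta f$ satisfies $(S^*-\lambda)u\in M$, because $g\in M$ and $M$ is $S^*$-invariant. I expect this forces $u$ into the finite-dimensional space $\{\Theta w\,k_\lambda: w\in\C^k\}$ modulo the degenerate directions of $\Theta$. Using the rank condition from Step 2, $w$ can then be written as $-\Theta(\bar\lambda)^*v$.

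This identification of $u$ is the main obstacle. I would attack it by testing $(S^*-\lambda)\Theta f\perp\Theta H^2$ against $\Theta e$, $e\in H^2(\C^k)$, and using that $\Theta$ is an isometric multiplier. That computation should reduce to $S^*-\lambda$ acting on $f$ up to a rank-$k$ correction.

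\textbf{Step 4 (the dichotomy).} For every $\lambda\in\D$,
\[
\ker(A-\lambda)\subseteq\ker\bigl((S^*-\lambda)\otimes I_m\bigr)=\C^m k_\lambda,
\]
so $\dim\ker(A-\lambda)\le m$. Off $\Delta$, $A-\lambda$ is therefore surjective with finite-dimensional kernel, hence Fredholm. By the stability of the index, $\dim\ker(A-\lambda)$ is constant on the connected set $\D\setminus\Delta$.

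If this constant is positive, then every $\lambda\in\D\setminus\Delta$ is an eigenvalue. Upper semicontinuity of $\dim\ker$ near points of $\Delta$, or closedness of $\sigma_p$ restricted to $\D$ in this analytic setting, then gives $\sigma_p(T^*)=\D$. Since $T$ is similar to a contraction by Theorem~\ref{TadmitFrame}, $\sigma(T^*)\subseteq\overline{\D}$, so $\sigma(T^*)=\overline{\D}$.

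If the constant is $0$, then $A-\lambda$ is invertible for $\lambda\in\D\setminus\Delta$, and so $\sigma(T^*)\cap\D\subseteq\Delta$ is discrete in $\D$.
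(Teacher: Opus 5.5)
Your argument is correct, and it takes a different route from the paper, which gives no proof and simply quotes \cite[Theorem 3.9]{CMPP}. You give a self-contained proof through the vector-valued model $T^*\simeq (S^*\otimes I_m)|_M$ and the Beurling--Lax theorem. It also identifies the two alternatives concretely. Generically $\dim\ker(T^*-\lambda)=m-k$, so the case $\sigma_p(T^*)=\D$ is exactly $k<m$. The discrete case is $k=m$, with $\Delta$ the (conjugated) zero set of $\det\Theta$. Several steps need tightening, though none needs a new idea.

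\emph{Step 3, the identification of $u$.} The step you call the main obstacle is exact and short, with no correction term. $M_\Theta$ is an isometry commuting with $S$, so $M_\Theta^*S^*M_\Theta=S^*M_\Theta^*M_\Theta=S^*$. Hence $\langle(S^*-\lambda)\Theta f,\Theta e\rangle=\langle(S^*-\lambda)f,e\rangle$ for all $e\in H^2(\C^k)$, and $(S^*-\lambda)u\in M$ forces $f=wk_\lambda$.

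\emph{Step 3, solving for $v$.} To solve $\Theta(\bar\lambda)^*v=-w$ for an arbitrary $w\in\C^k$, you need $\Theta(\bar\lambda)$ injective. That means the generic rank $r$ must equal $k$; being the maximal rank is not enough. This holds because $\Theta(\zeta)^*\Theta(\zeta)=I_k$ a.e.\ on $\T$. So some $k\times k$ minor has nonzero boundary values and cannot vanish identically. Also note that your $k_\lambda$ is the kernel at $\bar\lambda$, so your exceptional set is the conjugate of the zero set of the minors; this is harmless.

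\emph{Step 4, points of $\Delta$.} Upper semicontinuity of $\dim\ker(A-\lambda_0)$ at $\lambda_0\in\Delta$ requires $A-\lambda_0$ to be semi-Fredholm, which you have not shown. It is simpler to use your own formula $P_{M^\perp}(vk_\lambda)=\Theta\Theta(\bar\lambda)^*vk_\lambda$. It gives $\ker(A-\lambda)=\{vk_\lambda:\Theta(\bar\lambda)^*v=0\}$, so $\dim\ker(A-\lambda)=m-\operatorname{rank}\Theta(\bar\lambda)\ge m-k$ for every $\lambda\in\D$, with equality off $\Delta$. This also makes the index-stability argument unnecessary.

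\emph{Unimodular eigenvalues.} The conclusion $\sigma_p(T^*)=\D$ also needs that $T^*$ has no eigenvalues on $\T$. This follows from the strong stability of $T^*$ in Theorem~\ref{TadmitFrame}, or directly from $A^n\to 0$ strongly.
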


We recall that $\cH$ is a \emph{reproducing kernel Hilbert space} if $\cH$ is a vector subspace of the set of all complex-valued functions defined on a set $X$, and for every $x\in X$ the evaluation functional $e_{x}: \cH \to \C$, $e_x(\phi) = \phi(x)$ is bounded. As a consequence of the Riesz representation theorem, for every $x\in X$ there exists a unique $k_x \in \cH$ such that $$ \phi(x) = e_x(\phi) = \langle \phi,k_x \rangle.$$ The function $k_x$ is called the \emph{reproducing kernel for the point $x$}, and the function $K:X\times X \to \C$ given by $K(x,y) = k_y(x)$ is the \emph{reproducing kernel} for $\cH$.

In this context, if $\{\phi_j\}_{j\in \Omega}$ is a frame for $\cH$, then  the reproducing kernel satisfies
\begin{equation}\label{frame_kernel}
   A \|k_x\|^2 \leq \sum_{j\in \Omega} |\phi_{j}(x)|^2 \leq B \|k_x\|^2
\end{equation}
for every $x\in X$. In particular, if $\{\phi_j\}_{j\in \Omega}$ is a Parseval frame we have for every $x\in X$ that
\begin{equation*}
     \sum_{j\in \Omega} |\phi_{j}(x)|^2 = \|k_x\|^2.
\end{equation*}
Furthermore,  $\{\phi_j\}_{j\in \Omega}$ is a Parseval frame if and only if $K(x,y) = \sum_{j\in \Omega} \phi_j(x) \overline{\phi_{j}(y)}$ for all $x,y \in X$ (see \cite{Paul}).

\subsection{Hardy spaces in the polydisc}\label{Hardy}

Let $N \in \N$. The $N$-dimensional  polydisc and   torus are respectively defined as$$\D^{N} = \{ z \in \C^N \, : \, |z_{j}| < 1 \text{ for all } j \}$$and$$\T^{N} = \{ \zeta \in \C^N \, : \, |\zeta_{j}| = 1 \text{ for all } j \}.$$ We endow the compact abelian group $\T^N$ with its normalized Haar measure $m_N$ (so that $m_N(\T^N) = 1$), and simply write $d\zeta$ for $dm_N(\zeta)$. Note that this measure coincides with the normalized Lebesgue measure of $\T^N$. As usual, we write  $w^{\alpha}$ for the monomial $w_{1}^{\alpha_{1}}\cdots w_N^{\alpha_N}$   ($w\in \C^N$ and $\alpha\in  \Z^N$).

The \emph{Hardy space on the polydisc} $H^{2}(\D^N)$ consists of all holomorphic functions $f$ on $\D^{N}$ such that $$\|f\|_2 = \lim_{r \to 1^{-}} \left( \int_{\T^N} |f(r\zeta)|^2 \, d\zeta \right)^{1/2} < \infty.$$Alternatively, $H^{2}(\D^N)$ can be identified with the space of all power series $$f(z) = \sum_{\alpha \in \N_{0}^N} c_{\alpha}(f) z^{\alpha}$$ having square-summable Taylor coefficients, with the norm $\|f\|_{2} = \big( \sum_{\alpha} |c_{\alpha}(f)|^2 \big)^{1/2}$. It is a well-known fact that $H^2(\D^N)$ is a reproducing kernel Hilbert space with kernel
\begin{equation}\label{kernel}
K(z,w) = k_{w}(z) = \prod_{j=1}^{N} \frac{1}{1 - \overline{w_j} z_j}, \qquad z,w \in \D^N . 
\end{equation}
Note also that \begin{equation}\label{eq-norm-kernel}
\|k_{w}\|^2=\prod_{k=1}^{N} \frac 1 {1 - |w_k|^2}.
\end{equation}

Given  $f\in L^{2}(\T^N)$ (the space of all square-integrable functions) we consider its  Fourier series $$\sum_{\alpha \in \Z^N} \widehat{f}(\alpha) \zeta^{\alpha},$$ where $$\widehat{f}(\alpha) = \int_{\T^N} f(\zeta) \zeta^{-\alpha} \, d\zeta.$$ The \emph{Hardy space on the torus}, $H^2(\T^N)$, is then defined as the closed subspace of $L^2(\T^N)$ consisting of functions whose Fourier coefficients vanish whenever any index $\alpha_j$ is strictly negative. Namely, 
$$
H^2(\T^N) = \Big\{ f \in  L^2 (\T^N)\,\,\colon\,\, \widehat{f}(\alpha) = 0 \,  , \, \,\, \,
\forall \alpha \in \mathbb{Z}^{(\mathbb{N})} \setminus \mathbb{N}_{0}^{(\mathbb{N})} \Big\}.
$$

We recall that, By Fatou's theorem, every $f \in H^2(\D^N)$ has a radial limit $$F(\zeta) = \lim_{r \to 1^{-}} f(r\zeta)$$ for almost every $\zeta \in \T^N$, and $f^* \in H^2(\T^N)$. Conversely, any function $F \in H^2(\T^N)$ can be uniquely extended to a holomorphic function $f \in H^2(\D^N)$ via the Poisson integral. This naturally identifies $H^2(\T^N)$ and $H^2(\D^N)$ isometrically (see, e.g., \cite[Theorem 2.1.3]{Rud}). We remark that, in this identification, Taylor coefficients of $f$ correspond to Fourier coefficients of $F$ (and, since we are in the Hilbert space case, we can establish the isometric isomorphism without the need of Fatou's theorem and Poisson integrals). 
For convenience, we will denote by $f$ both analytic function defined on $\D^N$ and its radial limit defined on $\T^N$ and, when needed,  explicitly mention the domain. 

Finally, we denote by $\bP$ the \emph{Riesz projection}, i.e., the orthogonal projection of $L^{2}(\T^N)$ onto $H^2(\T^N)$. This projection  simply maps $f \in L^2(\T^N)$ to its \emph{analytic} part by keeping  only the Fourier coefficients of $f$ with non-negative exponents:    
$$\bP(f) = \sum_{\alpha \in \N_0^N} \widehat{f}(\alpha) \zeta^{\alpha}.$$

\section{Frames of orbits of multiplication operators}

The goal of this paper is to investigate the existence of frames for the Hardy space $H^{2}(\T^N)$ generated by (finitely or infinitely many) orbits of multiplication operators. 

For $N \in \N$ and a \emph{symbol} $\varphi \in H^{\infty}(\T^N)$, let $M_{\varphi}: H^2(\T^N) \to H^2(\T^N)$ be the multiplication operator defined by $M_{\varphi}f = \varphi f$. It is a standard fact that $M_{\varphi}$ is bounded with norm $\|M_{\varphi}\| = \|\varphi\|_{\infty}$. Also,  $M_{\varphi}$ is an isometry if and only if its symbol $\varphi$ is an inner function, which means that $|\varphi(\zeta)| = 1$ for almost every $\zeta \in \T^{N}$.

The adjoint operator $M_{\varphi}^{*}: H^{2}(\T^N) \to H^{2}(\T^N)$ takes the form $M_{\varphi}^{*}f = \bP(\overline{\varphi}f)$, where $\bP$ is the orthogonal Riesz projection defined above.

 Given $\varphi \in H^{\infty}(\T^N)$, we want to determine whether there exists a countable set $\{f_i\}_{i\in I}\subseteq H^{2}(\T^{N})$ and constants $A,B>0$ such that 
\begin{equation}\label{frameMphi}
    A\|g\|^2 \leq \sum_{n=0}^{\infty}\sum_{i\in I} |\langle g, M_{\varphi}^n f_i \rangle|^2 \leq B\|g\|^2
\end{equation}
holds for all $g \in H^{2}(\T^N)$. A key observation is that the frame property \eqref{frameMphi} can be transferred from $H^{2}(\T^N)$ to $H^{2}(\D^N)$ (and vice versa) via the isometric isomorphism described in Section \ref{Hardy}. Furthermore, recalling that \eqref{kernel} is the reproducing kernel for $H^{2}(\D^N)$, if the set $\{M_{\varphi}^n f_i\}_{i\in I,n\geq 0}$ is a frame for $H^{2}(\D^{N})$, then the frame inequality applied to the reproducing kernel yields the following simple Lemma. 
\begin{lemma}\label{Lem-frameykernel} 
Let $\varphi \in H^{\infty}(\D^N)$ be such that $|\varphi(z)| < 1$ for all $z \in \D^N$. Suppose that $\{M_{\varphi}^n f_i\}_{i\in I,n\geq 0}$ is a frame for $H^{2}(\D^{N})$ with bounds $A, B > 0$. Then, for every $z \in \D^N$, we have
\begin{equation}\label{eq-frameykernel}
A \frac{1-|\varphi(z)|^2}{\prod_{k=1}^{N}(1-|z_k|^2)} \leq \sum_{i\in I} |f_i(z)|^2 \le B \frac{1-|\varphi(z)|^2}{\prod_{k=1}^{N}(1-|z_k|^2)}.  
\end{equation}
\end{lemma}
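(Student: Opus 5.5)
The plan is to apply the frame inequality \eqref{frame_kernel} to the reproducing kernel $k_z$ of $H^2(\D^N)$ and then sum a geometric series in $|\varphi(z)|^2$. Fix $z\in\D^N$. For each $i\in I$ and $n\ge 0$ the reproducing property gives
\begin{equation*}
\langle k_z, M_\varphi^n f_i\rangle=\overline{\langle M_\varphi^n f_i, k_z\rangle}=\overline{\varphi(z)^n f_i(z)}.
\end{equation*}
Here $\varphi^n f_i$ is understood as the holomorphic function on $\D^N$ corresponding to $M_\varphi^n f_i$ under the isometric identification $H^2(\T^N)\cong H^2(\D^N)$. This identification is compatible with multiplication by bounded analytic functions, since the Poisson extension of $\varphi f$ is the product of the extensions.

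Next I substitute $y=k_z$ into the frame inequality, which is the specialization \eqref{frame_kernel} of the frame property. This gives
\begin{equation*}
A\|k_z\|^2\le \sum_{i\in I}\sum_{n\ge0}|\varphi(z)|^{2n}|f_i(z)|^2\le B\|k_z\|^2 .
\end{equation*}
All terms are nonnegative, so Tonelli lets me interchange the order of summation. Because $|\varphi(z)|<1$, the inner geometric series equals $(1-|\varphi(z)|^2)^{-1}$. The middle expression therefore becomes $(1-|\varphi(z)|^2)^{-1}\sum_{i}|f_i(z)|^2$.

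Finally I insert the formula \eqref{eq-norm-kernel}, $\|k_z\|^2=\prod_k(1-|z_k|^2)^{-1}$, and multiply through by $1-|\varphi(z)|^2>0$; this yields \eqref{eq-frameykernel}. The only step needing a little care is the first one: checking that point evaluation of $M_\varphi^n f_i$ at an interior point $z$ really equals $\varphi(z)^n f_i(z)$, i.e., that the torus/disc identification intertwines the multiplication operators. This is standard, and the rest is routine. The hypothesis $|\varphi(z)|<1$ is used only to make the geometric series converge; without it the middle sum would be infinite whenever some $f_i(z)\neq0$.
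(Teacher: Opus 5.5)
Your proposal is correct and takes the same route as the paper: apply the frame inequality to the reproducing kernel $k_z$, use the reproducing property to get terms $|\varphi(z)|^{2n}|f_i(z)|^2$, sum the geometric series, and insert $\|k_z\|^2=\prod_k(1-|z_k|^2)^{-1}$. Your remarks on Tonelli and on the torus/polydisc identification commuting with multiplication only spell out steps the paper leaves implicit.
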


\begin{proof} 
For  $z \in \D^N$, the frame condition \eqref{frameMphi} applied to  the reproducing kernel $k_z$ gives
$$A \|k_z\|^2 \leq \sum_{n \ge 0} \sum_{i\in I} |\langle k_z, M_{\varphi}^n f_i\rangle|^2 \leq B \|k_z\|^2.$$
By the reproducing property,
the term in the center becomes
$$\sum_{n \ge 0} \sum_{i\in I} |\langle k_z, M_{\varphi}^n f_i \rangle|^2 
= \left( \sum_{n \ge 0} |\varphi(z)|^{2n} \right) \left( \sum_{i\in I}  |f_i(z)|^2 \right) = \frac{1}{1 - |\varphi(z)|^2} \sum_{i\in I}  |f_i(z)|^2,$$
where the geometric series converges since $|\varphi(z)| < 1$. 
We recall from \eqref{eq-norm-kernel} that   $$\|k_z\|^2 = \prod_{k=1}^{N} \frac 1 {1 - |z_k|^2}.$$ 
By substituting these expressions into the frame inequality \eqref{eq-frameykernel} we get  the desired result.~\end{proof}

\begin{remark}\label{casoN>2}
Two of our main results establish that, for $N>1$, it is impossible to construct frames for $H^2(\T^N)$ from finitely many orbits of either $M_{\varphi}$ or $M_{\varphi}^{*}$ (see Proposition \ref{prop-no-finitos} and Theorem \ref{teo-phi1} below). The proof proceeds by showing that there are no such frames in the two-variable case. The case $N > 2$ can be reduced to the two-variable case by considering the orthogonal projection $P_V$ onto the closed subspace $V = \overline{\text{span}}\{\zeta_1^k \zeta_2^l : k,l\geq 0\}$. If $\{M_{\varphi}^{n} f_i\}_{i \in I, n \geq 0}$ (or $\{(M_{\varphi}^{*})^{n} f_i\}_{i \in I, n \geq 0}$) were a frame for $H^2(\T^N)$, its image under $P_V$ would yield a frame for $V$. Since we show that this does not occur for $N=2$, the non-existence of such frames for $H^2(\T^N)$ follows.
\end{remark}

\subsection{Main results}

In this section, we state and prove our main results. We state them  for $H^2(\mathbb{T}^N)$, but they are clearly valid also for $H^2(\mathbb{D}^N)$. Moreover, throughout the proofs we move freely between these spaces in order to exploit tools from  either Fourier or complex analysis.

\begin{theorem}\label{frame_varphi<=1}
Let $N\in \N$ and $\varphi \in H^{\infty}(\T^N)$. The following are equivalent. 
\begin{enumerate}
\item[i)] The operator $M_{\varphi}$ admits a Parseval frame of $H^2(\T^N)$;
\item[ii)] The operator $M_{\varphi}$ admits a  frame of $H^2(\T^N)$;
\item[iii)] The function $\varphi$ satisfies $\|\varphi\|_{\infty} \leq 1$ and $|\widehat{\varphi}(0)| < 1$.
\end{enumerate}
\end{theorem}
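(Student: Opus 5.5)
We will prove i) $\Rightarrow$ ii) $\Rightarrow$ iii) $\Rightarrow$ i). The first implication is trivial, since a Parseval frame is a frame.

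For ii) $\Rightarrow$ iii) we use Theorem~\ref{TadmitFrame}: $M_\varphi$ is similar to a contraction and $M_\varphi^*$ is strongly stable. Similarity to a contraction makes $M_\varphi$ power bounded, so $\|\varphi^n\|_\infty=\|M_\varphi^n\|\le C$ for all $n$. This forces $\|\varphi\|_\infty\le 1$: if $|\varphi|>1+\epsilon$ on a set of positive measure, then $\|\varphi^n\|_\infty\ge(1+\epsilon)^n\to\infty$.

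For the second condition we test strong stability on the constant function $1$, which is the kernel $k_0$. We have $\langle (M_\varphi^*)^n 1, 1\rangle=\overline{\langle \varphi^n,1\rangle}=\overline{\widehat{\varphi}(0)}^{\,n}$. If $|\widehat\varphi(0)|=1$, this gives $\|(M_\varphi^*)^n1\|\ge 1$ for all $n$, contradicting strong stability. (Equivalently, $\|\varphi\|_\infty\le1$ together with $|\varphi(0)|=1$ makes $\varphi$ a unimodular constant by the maximum principle, and then $M_\varphi$ is unitary, which Remark~\ref{ss-unitarynoframe} rules out.) Hence $|\widehat\varphi(0)|<1$.

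For iii) $\Rightarrow$ i) we apply Theorem~\ref{thm:PF_orbits}. Contractivity follows from $\|M_\varphi\|=\|\varphi\|_\infty\le1$, so it remains to show that $M_\varphi^*$ is strongly stable. Since $M_\varphi^*$ is a contraction, Remark~\ref{ss-unitarynoframe}(1) lets us check only an orthonormal basis. It is cleaner still to use the kernels $k_w$, $w\in\D^N$: they span a dense subspace, and strong stability on a dense set extends to all vectors for a contraction by an $\varepsilon/2$ argument. We have $(M_\varphi^*)^n k_w=\overline{\varphi(w)}^{\,n}k_w$. The hypothesis $\|\varphi\|_\infty\le1$ gives $|\varphi(z)|\le1$ on $\D^N$, via the Poisson integral. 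The maximum principle on the polydisc then gives either $|\varphi(z)|<1$ everywhere on $\D^N$, or $\varphi$ is a unimodular constant. The latter is excluded by $|\widehat\varphi(0)|=|\varphi(0)|<1$. So $\|(M_\varphi^*)^nk_w\|=|\varphi(w)|^n\|k_w\|\to0$ for every $w$, and hence $M_\varphi^*$ is strongly stable.

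The main obstacle is this last step. It requires passing from the boundary condition on $\T^N$ to the strict interior bound $|\varphi|<1$ on $\D^N$, which uses the maximum principle for bounded holomorphic functions on the polydisc applied to $\varphi$ viewed as its Poisson extension. It also requires justifying density of the kernels, which holds because any $f$ orthogonal to all $k_w$ vanishes identically.
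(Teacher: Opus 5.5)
Your proof is correct, and it takes a different route from the paper's in both nontrivial implications.

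\textbf{ii)$\Rightarrow$iii).} You read off $\|\varphi\|_\infty\le1$ from power boundedness (the similarity half of Theorem~\ref{TadmitFrame}) together with $\|M_\varphi^n\|=\|\varphi\|_\infty^n$. The paper instead works directly with the synthesis operator $R$ on $H^2_{\ell^2(I)}(\T)$. It uses the intertwining $M_\varphi^mR=RS^m$ to show that every $R(S^ke_j)$ vanishes on $\{|\varphi|>\lambda\}$, and this contradicts the surjectivity of $R$. That is a hands-on version of the same power-boundedness mechanism, which avoids invoking the similarity statement.

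For $|\widehat\varphi(0)|<1$, the paper argues that $\|\varphi\|_\infty\le1$ and $|\widehat\varphi(0)|=1$ force $\varphi$ to be a unimodular constant. Then $M_\varphi$ is unitary, which Remark~\ref{ss-unitarynoframe}(2) excludes. You test strong stability on $1=k_0$ via $\langle (M_\varphi^*)^n1,1\rangle=\overline{\widehat\varphi(0)}^{\,n}$. This uses $\widehat{\varphi^n}(0)=\widehat\varphi(0)^n$, valid because $\varphi$ is analytic. Your argument is self-contained and does not even need the sup-norm bound.

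\textbf{iii)$\Rightarrow$i).} The paper checks strong stability of $M_\varphi^*$ on monomials $z^{\mathbf i}$. It splits $\varphi=a_0+q+\eta$ according to whether the exponents satisfy $0<\mathbf j\le\mathbf i$ or $\mathbf j\not\le\mathbf i$. This gives $\bP(\overline\varphi^{\,n}z^{\mathbf i})=\sum_{s=0}^{|\mathbf i|}\binom{n}{s}\overline{a_0}^{\,n-s}\bP(\overline{q}^{\,s}z^{\mathbf i})$, which tends to $0$ because $|a_0|<1$. You instead use that each $k_w$ is an eigenvector of $M_\varphi^*$ with eigenvalue $\overline{\varphi(w)}$. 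The maximum principle then upgrades $|\varphi|\le1$ and $|\varphi(0)|<1$ to $|\varphi|<1$ on all of $\D^N$, and density of the kernels finishes the argument.

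Your route is shorter and more conceptual. The paper's is purely Fourier-combinatorial on the torus and never uses interior values or the maximum principle. It therefore transfers verbatim to $\T^\infty$ (finitely supported multi-indices), as Section~4 requires. Your kernel argument would need point evaluations on $\D^\infty\cap\ell^2$ and a slice-wise maximum principle there: true, but more infrastructure.
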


\begin{proof}
The implication $i) \Rightarrow ii)$ is clear, so let us show $ii) \Rightarrow iii)$. 
    Suppose that $\{ M_{\varphi}^n f_i \}_{i\in I,n\geq0}$ is a frame for $H^{2}(\T^N)$ for some countable set $\{f_i\}_{i\in I} \subseteq H^{2}(\T^N)$. 
    {Following the approach in \cite[Remark 3.8]{ACCP23}, the synthesis operator of the frame $\{ M_{\varphi}^n f_i \}_{i\in I,n\geq0}$ can be alternatively defined on $H^2_{\ell^2(I)}(\T)$ (the Hardy space of $\ell^2(I)$-valued functions)—rather than the usual sequence space $\ell^2(\mathbb{N}_0\times I)$— by means of a precomposition with the isometric isomorphism $g \mapsto \{\langle g, S^n e_i \rangle \}_{i\in I, n\geq0}$.} With this in mind, the synthesis operator $R: H^{2}_{\ell^2(I)}(\T) \to H^2(\T^N)$ takes the form
\begin{equation*}
    Rg = \sum_{n=0}^{\infty} \sum_{i\in I} \langle g, S^n e_i \rangle M_{\varphi}^n f_{i},
\end{equation*}
where $S$ is the unilateral shift on $H^2_{\ell^2(I)}(\T)$ and $\{e_i\}_{i \in I}$ is the standard orthonormal basis of $\ell^2(I)$. 
The operator $R$ is bounded, surjective, and intertwines $M_\varphi$ and $S$, i.e.,  $M_{\varphi}^m R = R S^m$ for all $m\in \N$. Applying this to the basis elements $S^k e_j$, we obtain
\begin{equation*}
    \varphi^m R(S^k e_j) = R (S^{m+k} e_j).
\end{equation*}
Assume, towards a contradiction, that $\|\varphi\|_{\infty} > 1$. Then, there exists $\lambda > 1$ such that the set $E_\lambda=\{\zeta\in \T^N : |\varphi(\zeta)|> \lambda \}$ has positive Lebesgue measure. Then, we have
\begin{align*}
    \lambda^{2m} \int_{E_ \lambda} |R(S^k e_j)(\zeta)|^2 \, d\zeta
    &\leq \int_{E_\lambda} |\varphi(\zeta)|^{2m} |R(S^k e_j) (\zeta)|^2 \, d\zeta \\
    & \leq \int_{\T^N} |R (S^{m+k} e_j) (\zeta)|^2 \, d\zeta \leq \|R\|^2.
\end{align*}
Since $\lambda > 1$, letting $m \to +\infty$ we get that necessarily $\int_{E_ \lambda} |R(S^k e_j)(\zeta)|^2 \, d\zeta = 0$. Hence, $R(S^k e_j) = 0$ almost everywhere on $E_\lambda$ for all $k \in \N, j \in I$. Since $R$ is surjective and $\{S^k e_j\}_{j\in I,k\geq0}$ spans $H^2_{\ell^2(I)}(\T)$, this implies that every function in $H^2(\T^N)$ vanishes almost everywhere on $E_\lambda$, which is impossible since $|E_\lambda| > 0$. This shows that $\|\varphi\|_\infty \le 1$.
    
To finish proving $iii)$, note that if $\|\varphi\|_\infty \le 1$ and $|\widehat\varphi(0)| = 1$, then $\varphi$ must be a unimodular constant. In this case, $M_\varphi$ is a unitary operator, which cannot admit a frame of orbits (as observed in (2) of Remark \ref{ss-unitarynoframe}). Hence, $|\widehat{\varphi}(0)| < 1$.

It remains to show $iii) \Rightarrow i)$. Assume $\|\varphi\|_{\infty} \leq 1$ and $|\widehat{\varphi}(0)| < 1$. Since $M_\varphi$ is a contraction, by Theorems \ref{TadmitFrame} and \ref{thm:PF_orbits}, it suffices to show that $\norm{(M_\varphi^*)^n g} \to 0$ for all $g \in H^2(\T^N)$.
    
Let $\mathbf{i} \in \N_0^N$ and consider the monomial $$\rchi^\mathbf{i}(z) = z^\mathbf{i}=z_1^{\mathbf{i}_1} \cdots z_N^{\mathbf{i}_N}.$$ By (1) of Remark \ref{ss-unitarynoframe}, it is enough to show that $\norm{(M_\varphi^*)^n \rchi^\mathbf{i}} \to 0$ as $n \to +\infty$. An easy computation shows  that  $(M_\varphi^*)^n f = \bP (\overline{\varphi}^nf)$, where $\bP$ is the Riesz projection. Therefore, we need to analyze $\bP(\overline{\varphi}^n \rchi^\mathbf{i})$.
    
 Write $\varphi(z) = a_0 + q(z) + \eta(z)$, where $a_0 = \widehat{\varphi}(0)$, $q(z)$ contain exactly $z^\mathbf{j}$ terms with $0 < \mathbf{j} \le \mathbf{i}$, and $\eta(z)$ contains the remaining $z^\mathbf{j}$ terms with $\mathbf{j} \not\le \mathbf{i}$.
Note that
\begin{align*}
(\overline{\varphi})^n = (\overline{a_0} + \overline{q} + \overline{\eta})^n = (\overline{a_0} +\overline{q})^n + \overline{\eta} \sum_{r=1}^n \binom{n}{r} (\overline{a_0} + \overline{q})^{n-r} \overline{\eta}^{r-1} = (\overline{a_0} + \overline{q})^n + \overline{\eta} \overline{\lambda}_n 
\end{align*}
with $\lambda_n \in H^\infty(\T^N)$. Since every monomial in $\overline{\eta} \overline{\lambda}_n $ contains a conjugate variable to a power strictly greater than the corresponding one in $\rchi^\mathbf{i}$, we have $\bP(\overline{\varphi}^n \rchi^\mathbf{i}) = \bP((\overline{a_0} + \overline{q})^n \rchi^\mathbf{i})$. 
Note also that $\bP(\overline{q}^s \rchi^\mathbf{i})=0 $ for any $s> |\mathbf{i}|, $ since every monomial in $\overline{q}^s \rchi^\mathbf{i}$ has some surviving conjugate coordinate. This and  the binomial formula give us
\begin{equation*}
    \bP(\overline{\varphi}^n \rchi^\mathbf{i}) = \sum_{s=0}^{|\mathbf{i}|} \binom{n}{s} \overline{a_0}^{n-s} \bP(\overline{q}^s \rchi^\mathbf{i}),
\end{equation*}
and then,
\begin{equation*}
 \norm{\bP(\overline{\varphi}^n \rchi^\mathbf{i})} \le \sum_{s=0}^{|\mathbf{i}|} \binom{n}{s} |a_0|^{n-s} \norm{\bP(\overline{q}^s \rchi^\mathbf{i})}.
\end{equation*}
Since $|a_0| < 1$, each term $\binom{n}{s} |a_0|^{n-s}$ goes to $0$ as $n \to +\infty$. The sum is finite and independent of $n$, so the limit is zero, concluding the proof.
\end{proof}

\begin{remark}
    As an immediate consequence of the previous theorem, constant functions admit (Parseval) frames if and only if they have absolute value less than $1$—a fact that is also easily verified directly.
\end{remark}

We now focus on  the existence of frames for $H^{2}(\T^N)$ generated  by finitely many orbits of $M_\varphi$. First, we show that for $N>1$ there are no such frames.

\begin{proposition}\label{prop-no-finitos}
Let $N>1$ and $\varphi \in H^{\infty}(\T^N)$. Then, $M_{\varphi}$ does not admit a frame with finitely many orbits.
\end{proposition}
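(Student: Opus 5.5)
The plan is to avoid the kernel estimate of Lemma~\ref{Lem-frameykernel} and use an eigenvector counting argument for $M_\varphi^*$. It works directly for every $N>1$, so the reduction to $N=2$ of Remark~\ref{casoN>2} is not needed. Suppose, towards a contradiction, that $I$ is finite and $\{M_\varphi^n f_i\}_{i\in I,n\ge0}$ is a frame (in particular a complete system) for $H^2(\D^N)$.

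\emph{Case 1: $\varphi$ is constant}, say $\varphi\equiv a$. Then $M_\varphi^n f_i=a^nf_i$. The whole system lies in $\operatorname{span}\{f_i\}_{i\in I}$, which is finite dimensional, while $H^2(\D^N)$ is not. This is a contradiction.

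\emph{Case 2: $\varphi$ is nonconstant.} The key observation is that reproducing kernels are eigenvectors of $M_\varphi^*$. Indeed, $\langle g, M_\varphi^*k_w\rangle=\langle \varphi g,k_w\rangle=\varphi(w)g(w)=\langle g,\overline{\varphi(w)}k_w\rangle$, so $M_\varphi^*k_w=\overline{\varphi(w)}k_w$. Now fix $\mu\in\C$ and let $v$ satisfy $M_\varphi^*v=\overline{\mu}v$. Then
\[
\langle M_\varphi^n f_i, v\rangle=\langle f_i,(M_\varphi^*)^n v\rangle=\mu^n\langle f_i,v\rangle .
\]
Consider the linear map $v\mapsto(\langle f_i,v\rangle)_{i\in I}$ from $\ker(M_\varphi^*-\overline\mu)$ into $\C^{I}$. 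If $v$ lies in its kernel, then $v$ is orthogonal to every $M_\varphi^nf_i$, so $v=0$ by completeness. Hence the map is injective and
\[
\dim\ker(M_\varphi^*-\overline{\mu})\le |I|<\infty\quad\text{for every }\mu\in\C .
\]

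It remains to find some $\mu$ for which this kernel is infinite dimensional. Take any $w_0\in\D^N$, put $\mu=\varphi(w_0)$, and let $Z=\{w\in\D^N:\varphi(w)=\mu\}$. Every $k_w$ with $w\in Z$ lies in $\ker(M_\varphi^*-\overline\mu)$. Kernels at distinct points are linearly independent: if $\sum_j c_jk_{w_j}=0$, test against polynomials $g$ that vanish at all but one of the $w_j$.

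So the proof reduces to showing that $Z$ is infinite, and this is the one step needing a genuine several-variables input. Since $\varphi-\mu$ is a nonconstant holomorphic function on $\D^N$ with $N\ge2$ that vanishes at $w_0$, its zero set has no isolated points. I would argue this concretely. After a small linear change of coordinates, $\varphi-\mu$ restricted to the complex line through $w_0$ in the $z_1$-direction is not identically zero. By Hurwitz's theorem (or the Weierstrass preparation theorem), for every $z'$ near $w_0'$ the slice $z_1\mapsto\varphi(z_1,z')-\mu$ has a zero $z_1(z')$ close to $(w_0)_1$. This produces infinitely many points of $Z$.

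Consequently $\ker(M_\varphi^*-\overline\mu)$ is infinite dimensional, contradicting the bound $\le|I|$. Hence no finite family of orbits of $M_\varphi$ is even complete in $H^2(\T^N)$, and in particular none is a frame. As a consistency check, for $N=1$ the set $Z$ is discrete and this argument gives nothing, in line with the finite Blaschke product case announced in the introduction.
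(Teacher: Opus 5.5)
Your proof is correct, and it takes a genuinely different route from the paper's.

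\textbf{The paper's route.} The paper reduces to $N=2$ (Remark~\ref{casoN>2}) and then uses the lower frame bound.
\begin{enumerate}
\item By Lemma~\ref{Lem-frameykernel}, $\sum_i|f_i(r\zeta)|^2$ must blow up as $r\to1^-$ wherever $|\varphi|<1$. This rules out non-inner symbols directly.
\item For inner symbols, a projection argument shows that $\varphi$ depends on both variables. The same blow-up estimate is then run on the slice $z_2=0$.
\item That second step needs the boundary-value result for slices from \cite{Mar15}. It also needs the computation showing that $h(\zeta_1)=\int_\T\varphi(\zeta_1,\zeta_2)\,d\zeta_2$ satisfies $|h|<1$ on a set of positive measure.
\end{enumerate}

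\textbf{Your route.} You use only completeness. Since $M_\varphi^*k_w=\overline{\varphi(w)}k_w$, the standard multiplicity bound $\dim\ker(M_\varphi^*-\overline{\mu})\le|I|$ applies. A nonconstant holomorphic function on a domain in $\C^N$ with $N\ge2$ has no isolated points in its level sets, so that kernel is infinite dimensional.

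\textbf{Comparison.} Your argument gives a strictly stronger conclusion: for $N>1$, no finite family of orbits of $M_\varphi$ is even complete. It also avoids the inner/non-inner case split, any boundary-value theory, and the reduction to $N=2$. What the paper's mechanism buys is that it detects failure of the frame inequality itself, not just of completeness. It is the same kernel estimate that drives Theorem~\ref{teo-frames-N=1}. There, for $N=1$, a single orbit can be complete without being a frame (e.g.\ $\varphi=(1+\zeta)/2$), a situation eigenvector counting cannot see.

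\textbf{A small correction to your closing remark.} For $N=1$ the set $Z$ is discrete in $\D$ but may be infinite, e.g.\ $\mu=0$ for an infinite Blaschke product. In that case your argument does say something: no finite family of orbits is complete. This is consistent with Theorem~\ref{teo-frames-N=1}.
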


\begin{proof}
By Remark \ref{casoN>2}, it is enough to prove the statement for $N=2$. Suppose  that
there exists $\{f_i\}_{i=1}^m \subseteq H^{2}(\T^2)$ such that $\{M_{\varphi}^{n} f_i\}_{1 \leq i \leq m,n\geq0}$ is a frame for $H^{2}(\T^2)$. Then, the extensions of $\{M_{\varphi}^{n} f_i\}_{1 \leq i \leq m,n\geq0}$ as holomorphic functions on the polydisc form a frame for $H^{2}(\D^2)$. 

We split the proof into two cases: either $|\varphi(\zeta)|<1$ in a set  $E\subseteq \T^2$ of positive measure or  $|\varphi(\zeta)|=1$ for almost every  $\zeta \in \T^{2}$ .

We first suppose $E=\{|\varphi|<1\}$ has positive measure. 
We use  Lemma \ref{Lem-frameykernel} to get, for $0<r<1$ and almost every $(\zeta_1,\zeta_2) \in E$ (those where all radial limits exist), 
$$ \sum_{i=1}^m |f_i(r\zeta_1,r\zeta_2)|^2 \geq A \frac{1-|\varphi(r\zeta_1,r\zeta_2)|^2}{ (1-r^2)^{2}} \to +\infty$$
when $r\to 1^{-}$. Since  $E$ has positive measure, this means that for some $i=1,...m$, the function $f_i $ does not belong to  $ H^{2}(\T^2)$ , a contradiction.

Let us suppose now that $|\varphi(\zeta)|=1$ for almost every  $\zeta \in \T^{2}$. First, we show that $\varphi$ cannot depend solely on a single variable.

Assume, for the sake of contradiction, that $\varphi(z_1,z_2) = \widetilde{\varphi}(z_1)$ for all $z_2 \in \D$, and define the closed subspace $W = \overline{\text{span}\{z_2^k : k\geq 0\}} \subseteq H^2(\D^2)$. Since $\{M_{\varphi}^n f_i\}_{n\geq 0,1 \leq i \leq m}$ is a frame for $H^2(\D^2)$, its orthogonal projection onto $W$, namely $\{P_{W}(M_{\varphi}^n f_i)\}_{1 \leq i \leq m,n\geq 0}$, must be a frame for $W$. Consequently, this projected system must be complete in $W$. However, observe that\begin{equation*}P_{W}(M_{\varphi}^n f_i)(z_1,z_2)= M_{\varphi}^n f_i(0,z_2) = \varphi(0,z_2)^n f_i(0,z_2)= \widetilde{\varphi}(0)^n f_i(0,z_2),\end{equation*}which implies that $\dim\left(\overline{\text{span}\{P_{W}(M_{\varphi}^n f_i)\}_{n\geq 0, 1 \leq i \leq m}}\right) \leq m < \infty = \dim(W)$. This is a contradiction, hence $\varphi$ must depend on both variables.

By \cite[Proposition 2.8]{Mar15}, the limit $$\lim_{r\to 1^-} \varphi(r\zeta_1,0)$$ exists for almost every  $\zeta_1 \in \T$, and the function $h(\zeta_1) = \lim_{r\to 1^-} \varphi(r\zeta_1,0)$ belongs to $H^2(\T)$. We can see, by computing the Fourier coefficients of  both sides, that \begin{equation*}h(\zeta_1) = \int_{\T} \varphi(\zeta_1,\zeta_2)\, d\zeta_2.\end{equation*} 
Since $|\varphi(\zeta_1,\zeta_2)| = 1$ for almost every  $\zeta_1, \zeta_2 \in \T$
we can see that $|h|<1$ in a set of postive measure (actually, it can be seen that $|h|<1$ almost everywhere, but we do not need this fact). Indeed, if $|h(\zeta_1)| = 1$ for almost every  $\zeta_1 \in \T$, then 
\begin{align*}
\int_{\T} |\varphi(\zeta_1,\zeta_2) - h(\zeta_1)|^2 & \,d\zeta_2
= \int_{\T} \left( |\varphi(\zeta_1,\zeta_2)|^2 - h(\zeta_1) \overline{\varphi(\zeta_1,\zeta_2)} - \overline{h(\zeta_1)} \varphi(\zeta_1,\zeta_2) + |h(\zeta_1)|^2 \right) \,d\zeta_2 \\
&= \int_{\T} |\varphi(\zeta_1,\zeta_2)|^2 \, d\zeta_2 - h(\zeta_1) \overline{\int_{\T}\varphi(\zeta_1,\zeta_2) \,d\zeta_2} - \overline{h(\zeta_1)} \int_{\T} \varphi(\zeta_1,\zeta_2) \,d\zeta_2 + 1 \\
&= 1 - |h(\zeta_1)|^2 - |h(\zeta_1)|^2 + 1 \\
&= 2 - 2|h(\zeta_1)|^2 = 0.
\end{align*}
This means that $\varphi(\zeta_1,\zeta_2) = h(\zeta_1)$ for almost every  $\zeta_2 \in \T$, which contradicts our previous remark on  $\varphi$.
Thus,  the set  $$F=\{\zeta\in \T\colon |h(\zeta)|=\lim_{r\to1^{-}}|\varphi(r\zeta_1,0)|<1\}$$ has positive measure.

For $0<r<1$ and $\zeta_1 \in \T$, by Lemma \ref{Lem-frameykernel} we have
\begin{equation*}
\sum_{i=1}^{m} |f_i(r \zeta_1,0)|^2 \geq A \frac{1 -|\varphi(r \zeta_1,0)|^2 } {(1- |r|^2) }, 
\end{equation*}
where $A>0$ is the lower frame bound.
Defining $h_i(\zeta_1) = \lim_{r\to 1^-} f_i(r\zeta_1,0)$, we have   $$\sum_{i=1}^{m} |h_i(\zeta_1)|^2 = \lim_{r \to 1^{-}} \sum_{i=1}^{m} |f_i(r \zeta_1,0)|^2  \geq A \lim_{r \to 1^{-}} \frac{1 -|\varphi(r \zeta_1,0)|^2 } {1- |r|^2} = +\infty$$ for almost every $\zeta_1 \in F$. This is a contradiction, since by  \cite[Proposition 2.8]{Mar15} each $h_i$ belongs to $H^2(\T)$.
\end{proof}

For the  case $N=1$ the situation is different. This is shown in the following theorem,  part of whose proof  relies on  classical facts on model spaces and Wold decomposition.

\begin{theorem}\label{teo-frames-N=1}
Let $\varphi\in H^{\infty}(\T)$. Then the following statements are equivalent.
\begin{enumerate}
\item[i)] The function $\varphi$ is a finite Blaschke product;
\item[ii)] The operator $M_{\varphi}$ admits an orthonormal basis of finitely many orbits;
\item[iii)] The operator $M_{\varphi}$ admits a Parseval frame of finitely many orbits;
\item[iv)] The operator $M_{\varphi}$ admits a frame of finitely many orbits. 
\end{enumerate}
\end{theorem}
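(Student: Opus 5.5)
We will prove the cycle $i)\Rightarrow ii)\Rightarrow iii)\Rightarrow iv)\Rightarrow i)$. The implications $ii)\Rightarrow iii)\Rightarrow iv)$ are immediate, since an orthonormal basis is a Parseval frame and a Parseval frame is a frame.

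For $i)\Rightarrow ii)$, let $\varphi$ be a finite Blaschke product of degree $d\ge 1$ with zeros $a_1,\dots,a_d$. A unimodular constant is excluded here, since that operator is unitary and has no frame of orbits by Remark \ref{ss-unitarynoframe}. Then $\varphi$ is inner, so $M_\varphi$ is an isometry. It is a pure isometry (a shift) because $\bigcap_n \varphi^n H^2(\T)=\{0\}$: a nonzero function in this intersection would have a zero of infinite order at $a_1$. By the Wold decomposition, $H^2(\T)=\bigoplus_{n\ge 0}\varphi^n K_\varphi$, with wandering subspace $K_\varphi=H^2\ominus \varphi H^2$. The model space $K_\varphi$ has dimension $d$, and the Malmquist--Walsh functions
$$f_i(z)=\frac{\sqrt{1-|a_i|^2}}{1-\overline{a_i}z}\prod_{j<i}\frac{z-a_j}{1-\overline{a_j}z},\qquad i=1,\dots,d,$$
form an orthonormal basis of it. Since $M_\varphi$ is an isometry, $\{\varphi^n f_i\}_{n\ge0,\,1\le i\le d}$ is then an orthonormal basis of $H^2(\T)$. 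Here we use that $M_\varphi$ maps $\varphi^n K_\varphi$ isometrically onto $\varphi^{n+1}K_\varphi$ and that these pieces are mutually orthogonal.

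For $iv)\Rightarrow i)$, suppose $\{\varphi^n f_i\}_{n\ge0,\,1\le i\le m}$ is a frame. By Theorem \ref{frame_varphi<=1}, $\|\varphi\|_\infty\le 1$ and $\varphi$ is not a unimodular constant, so $|\varphi(z)|<1$ on $\D$ and Lemma \ref{Lem-frameykernel} applies with $N=1$:
$$\sum_{i=1}^m |f_i(z)|^2 \ge A\,\frac{1-|\varphi(z)|^2}{1-|z|^2}.$$
Arguing exactly as in the first case of Proposition \ref{prop-no-finitos}, if $|\varphi|<1$ on a set of positive measure of $\T$, then the right-hand side is unbounded along radii ending in that set, while the radial limits of the $f_i$ are finite almost everywhere. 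Hence $\varphi$ is inner.

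Next we write $\varphi=cB S$, with $B$ a Blaschke product and $S$ a singular inner function, and aim to show that $S$ is constant and $B$ is finite. Since $\varphi$ is inner, $M_\varphi$ is an isometry, and it is pure because $\bigcap_n\varphi^nH^2=\{0\}$ (using $|\varphi(0)|<1$). The Wold decomposition then gives $H^2=\bigoplus_n \varphi^n K_\varphi$, and $M_\varphi$ is unitarily equivalent to the shift of multiplicity $\dim K_\varphi$. We now compress the frame onto $K_\varphi$. Let $P$ be the orthogonal projection onto $K_\varphi$. For $g\in K_\varphi$ and $n\ge 1$ we have $\langle g,\varphi^n f_i\rangle=0$, since $\varphi^n f_i\in\varphi H^2\perp K_\varphi$. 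Therefore the frame inequality restricted to $g\in K_\varphi$ involves only the $n=0$ terms, so $\{Pf_i\}_{i=1}^m$ is a frame for $K_\varphi$. This forces $\dim K_\varphi\le m<\infty$. Finally, $\dim K_\varphi<\infty$ holds exactly when $\varphi$ is a finite Blaschke product, a classical fact: a nontrivial singular factor or infinitely many zeros produce infinitely many linearly independent elements of $K_\varphi$, for instance the functions $K_{\varphi_k}\subseteq K_\varphi$ coming from divisors $\varphi_k$ of $\varphi$ of increasing degree, or $K_{S^{1/k}}$.

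The main obstacle is the step showing that $\varphi$ is inner. It requires the almost-everywhere radial limit argument with the kernel estimate; with $N=1$ the factor is only $(1-r^2)^{-1}$, so we need $|\varphi|<1$ on the boundary and not merely inside $\D$. This is exactly what the first case of Proposition \ref{prop-no-finitos} handles.
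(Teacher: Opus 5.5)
Your proposal is correct and follows the paper's proof. For $i)\Rightarrow ii)$ it uses the Wold decomposition plus an orthonormal basis of the $d$-dimensional model space. For $iv)\Rightarrow i)$ it uses Lemma \ref{Lem-frameykernel} with radial limits to make $\varphi$ inner, then finite-dimensionality of $K_\varphi$ to get a finite Blaschke product. You also fill two steps the paper leaves implicit: $|\varphi|<1$ on $\D$ (from Theorem \ref{frame_varphi<=1} and the maximum principle), and $\dim K_\varphi\le m$ (since $\langle g,\varphi^n f_i\rangle=0$ for $g\in K_\varphi$, $n\ge1$, so the compressions $Pf_i$ form a frame of $K_\varphi$). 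Only your aside on $K_{S^{1/k}}$ is loose: these spaces decrease in $k$, and what you actually need is the splitting $K_S=\bigoplus_{j<k}S^{j/k}K_{S^{1/k}}$, which gives $\dim K_S\ge k$.
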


\begin{proof}
We first observe that $ii) \Rightarrow iii)$ and $ \Rightarrow iii) \Rightarrow iv)$ are straightforward,
so we have to show $i) \Rightarrow ii)$ and $iv) \Rightarrow i)$.

$i) \Rightarrow ii)$ Suppose that $\varphi$ is a finite Blaschke product and define $W = H^{2}(\T) \ominus \varphi H^{2}(\T)$. By  \cite[Proposition 5.19]{GMR} we know that $\dim(W) = m <\infty$. Since $\varphi$ is, in particular, an inner function, $M_\varphi$ is a pure 
isometry, i.e., $\bigcap_{n \geq 0} \varphi^n H^2(\mathbb{T}) = \{0\}$. This means  
 that the so-called Wold decomposition of $H^2(\T)$ takes the form
$$H^2(\T) = \bigoplus_{n=0}^{\infty} \varphi^n W = \bigoplus_{n=0}^{\infty} M_{\varphi}^n W,$$
where the sum is orthogonal (see \cite{Ste}). 
Thus, if we choose an orthonormal basis $\{f_1, ...,f_m\}$ of $W$, we obtain that $\{M_{\varphi}^n f_i\}_{1 \leq i \leq m,n\geq 0}$ is an orthonormal basis of $H^{2}(\T)$.

$iv) \Rightarrow i)$ Assume that $M_\varphi$ admits a frame of finitely many orbits, this is, there exist $\{f_i\}_{i=1}^{m} \subset H^2({\T})$ such that $\{M_{\varphi}^n f_i\}_{1\leq i \leq m,n\geq 0} $ is a frame for $ H^2({\T})$.

We see first that $iv)$ implies that $\varphi$ is an inner function, i.e., $|\varphi(\zeta)|=1$  for almost every $\zeta\in\T$. Let  $E\subset \T$ be the set where $|\varphi|<1$. 

For each $r>0$ and $\zeta \in E$, Lemma \ref{Lem-frameykernel} gives us
$$ \sum_{i=1}^m |f_i(r\zeta)|^2 \geq A \frac{1-|\varphi(r\zeta)|^2}{(1-r^2)}.$$
Since $|\varphi\zeta)|<1$, the last expression  goes to infinity as $r\to 1^{-}$ for almost every $\zeta\in E$. This means that the set $E$ must  have measure zero, since $\sum_{1\le i\le m}|f_i|^2$ has finite integral on $\T$.  Hence, $\varphi$ is an inner function. Finally, since $W = H^{2}(\T) \ominus \varphi H^2(T)$ is a finite dimensional model space,  by \cite[Proposition 5.19]{GMR} we know that $\varphi$ must be a finite Blaschke product.
\end{proof}

\begin{example}
Let us see what the orbits of $M_\varphi$ look like when $\varphi$ is a finite Blaschke product. First, we take $a\in \D$ and consider the Blaschke factor 
$$\varphi_a (\zeta) = \frac{\zeta-a}{1-\overline{a}\zeta}, \quad \zeta\in \T.$$ 
In this case, the functions $f$ such that $\{M_{\varphi_a}^n f\}_{n\ge 0}$ is an orthonormal basis are precisely those of the form 
$$f(\zeta) = c \frac{\sqrt{1-|a|^2}}{1 - \overline{a} \zeta}$$
for some $c\in \T$.
This can be verified by a direct computation via reproducing kernels. Indeed, we know that the collection $\{M_{\varphi_a}^n f\}_{n\ge 0}$ is a Parseval frame if and only if
$$\frac{1}{1-\overline{w}z} = \sum_{n=0}^{\infty} \varphi_a^n(z) f(z) \overline{\varphi_a^n(w)} \overline{f(w)} = \frac{f(z)\overline{f(w)}}{1 - \varphi_a(z)\overline{\varphi_a(w)}},$$
which algebraically simplifies to the expression of $f$. 

Alternatively, it can also be deduced from the theory of model spaces. Since $\varphi_a$ is an inner function, we can consider the associated model space $N_{\varphi_a} = H^2(\T) \ominus \varphi_a H^2(\T)$. Now, $H^2(\T)$ admits the orthogonal direct sum decomposition
$$H^2(\T) = \bigoplus_{n=0}^{\infty} \varphi_a^n N_{\varphi_a}.$$
Furthermore, for a single Blaschke factor, the model space $N_{\varphi_a}$ is exactly one-dimensional and is spanned by the normalized reproducing kernel at the point $a$. That is, $N_{\varphi_a} = \text{span}\{f\}$, where $f$ is given as above.

Since the multiplication operator $M_{\varphi_a}$ is an isometry (because $|\varphi_a(\zeta)| = 1$ almost everywhere on $\T$) and $\|f\| = 1$, the orthogonal decomposition immediately implies that the sequence $\{M_{\varphi_a}^n f\}_{n\geq 0}$ forms an orthonormal basis for $H^2(\T)$.

This construction naturally extends to any finite Blaschke product. Let 
$$B(\zeta) = \prod_{j=1}^d \varphi_{a_j}(\zeta)$$ 
be a Blaschke product of degree $d \geq 1$, where $a_1, \dots, a_d \in \D$ are repeated according to multiplicity. Since $B$ is an inner function, we can consider the corresponding model space $N_B = H^2(\T) \ominus B H^2(\T)$. By the Wold decomposition, $H^2(\T)$ can be orthogonally decomposed as
$$H^2(\T) = \bigoplus_{n=0}^{\infty} B^n N_B.$$

It is a well-known fact that the dimension of $N_B$ equals the degree of the finite Blaschke product (or equivalently, its number of zeros counted with multiplicity), so $\dim(N_B) = d$ \cite[Proposition 5.16]{GMR}. Therefore,  the orbits by (the isometry) $M_B$ of any orthonormal basis $\{e_1, \dots, e_d\}$ for the finite-dimensional space $N_B$  generate an orthonormal basis for $H^2(\T)$. 
A classical and explicit choice for such a basis is the Malmquist-Walsh basis (see, for example, \cite[Section 5.9]{GMR}), which is defined by
$$e_1(\zeta) = \frac{\sqrt{1-|a_1|^2}}{1-\overline{a_1}\zeta}, \quad \text{and} \quad e_k(\zeta) = \frac{\sqrt{1-|a_k|^2}}{1-\overline{a_k} \zeta} \prod_{j=1}^{k-1} \varphi_{a_j}(\zeta) \quad \text{for } 2 \leq k \leq d.$$
As a consequence of all this, the collection 
$$\{M_B^n e_k : n \geq 0, \, 1 \leq k \leq d\}$$ 
forms an orthonormal basis for $H^2(\T)$. This illustrates that for a finite Blaschke product of degree $d$, the orthonormal basis is generated by exactly $d$ functions.
\end{example}

As we can see from the proof of Theorem \ref{teo-frames-N=1}, if $|\varphi| $ is not equal to 1 almost everywhere in $\T$, then the orbit of any $f\in\Hardy$ does not satisfy the lower frame  bound. The following example shows that, loosely speaking,  this may be the only  obstruction preventing the orbit from being a frame. Moreover, this shows that the orbit of a cyclic vector need not be a frame, a fact that appears to contradict Proposition 3.2 in \cite{Ches}.

\begin{example}
    Let  $\varphi\in H^{\infty}(\T)$ be given as $\varphi(\zeta) = ({1+\zeta})/{2}$. Then,  $\{M_{\varphi}^n 1\}_{n\geq 0} = \{\varphi^n\}_{n\geq 0}$ is a complete Bessel sequence which cannot be a frame by Theorem \ref{teo-frames-N=1}. 
\end{example}

Indeed, let us write
\begin{align*}
\varphi^n(\zeta) = \left( \frac{1+\zeta}{2} \right)^n = \frac{1}{2^n} \sum_{k=0}^{n} \binom{n}{k} \zeta^k.
\end{align*}
By induction,  if $g\in H^{2}(\T)$ satisfies $\langle g,\varphi^n \rangle =0$ for all $n\in \N$, we can see that every Fourier coefficient of $g$ is zero and, therefore, $g$ must  be the zero function. This shows that $\{\varphi^n\}_{n\geq 0}$ is complete. To see that it is a Bessel sequence, take $g\in H^{2}(\T)$ with $g(\zeta) = \sum_{j=0}^{\infty} b_j \zeta^j $. Then,
\begin{align*}
 \sum_{n=0}^{\infty} |\langle g,\varphi^n \rangle|^2
&= \sum_{n=0}^{\infty} \left| \frac{1}{2^n} \sum_{j=0}^n \binom{n}{j} b_j \right|^2 = \sum_{n=0}^{\infty} \left| \frac{1}{2^n} \sum_{j=0}^n \binom{n}{j}^{1/2} \binom{n}{j}^{1/2} b_j \right|^2 \\
&\leq \sum_{n=0}^{\infty} \frac{1}{2^{2n}} \left(  \sum_{j=0}^n \binom{n}{j} \right) \left( \sum_{j=0}^n \binom{n}{j} |b_j|^2 \right) \\ 
& \leq \sum_{n=0}^{\infty} \frac{1}{2^{n}} \left( \sum_{j=0}^n \binom{n}{j} |b_j|^2 \right)  \leq \sum_{j=0}^{\infty}  \left( \sum_{n=j}^\infty \frac{1}{2^{n}} \binom{n}{j}  \right) |b_j|^2, \\
&= 2 \|g\|^2
\end{align*}
where the last equality follows from the identity $\sum_{n=j}^{\infty} \binom{n}{j} x^n = {x^j}{(1-x)^{-(j+1)}} $ for $x=1/2$.

Although the previous theorem tells us  that  $\{\varphi^n\}_{n\geq 0}$ cannot be a frame, we sketch an argument showing that it does not satisfy the lower frame bound. For each $m\in \N$, we define $g_m (\zeta) = \sum_{j=0}^{m} (-1)^j \zeta^j $, so that $\|g_m\|= \sqrt{m}$. Using the binomial formula and Pascal's identity, we have
\begin{align*}
\sum_{n=0}^{\infty} |\langle g_m,\varphi^n \rangle|^2
&= \sum_{n=0}^{m}  \left| \frac{1}{2^n} \sum_{j=0}^n \binom{n}{j} (-1)^j \right|^2
+ \sum_{n=m+1}^{\infty}  \left| \frac{1}{2^n} \sum_{j=0}^m \binom{n}{j} (-1)^j \right|^2 \\
&\le  1 + \sum_{n=m+1}^{\infty}  \left|   \frac{1}{2^n} \binom{n-1}{m} \right|^2.
\end{align*}
The last expression can be seen as the sum of the squares of the probabilities of a negative binomial distribution $NB(m+1,1/2)$ and, therefore, is bounded by 1. As a consequence,  the lower frame bound cannot hold. 

\subsection{Frames of orbits of the adjoint of multiplication operators}
We now address the question whether $M_{\varphi}^{*}$  admits frames of (finitely or infinitely many) orbits of  $H^{2}(\D^{N})$. In this case, the answer is simpler, in the sense that it does not depend on $N$.

\begin{theorem}\label{teo-phi1}
	Let $N\in \N$ and $\varphi \in H^\infty (\T^N)$. The following statements are equivalent:
	\begin{enumerate}
		\item[i)] There exists a family of functions $\{g_{i}\}_{i\in I}$ in $H^2(\T^N)$ such that $\{{(M_{\varphi}^*)}^{n}g_i\}_{i\in I, n\geq0}$ is a frame for $H^2(\T^N)$;
		\item[ii)] $|\varphi(\zeta)|<1$  for almost every \ $\zeta\in\T^N$.
	\end{enumerate}
	Furthermore, if these equivalent conditions hold, the index set $I$ must be infinite.
\end{theorem}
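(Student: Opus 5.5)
The plan is to reduce everything to Theorems \ref{TadmitFrame} and \ref{thm:PF_orbits} applied to $T=M_\varphi^*$, whose adjoint is $T^*=M_\varphi$. The finiteness claim will then come from Theorem \ref{finiteorbits-spec} together with the open mapping theorem.

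\emph{Proof of $i)\Rightarrow ii)$.} Suppose $M_\varphi^*$ admits a frame of orbits. By Theorem \ref{TadmitFrame}, $M_\varphi^*$ is similar to a contraction, and $M_\varphi$ is strongly stable.
\begin{itemize}
\item Similarity to a contraction makes $M_\varphi^*$ power bounded. Since $\|(M_\varphi^*)^n\|=\|M_{\varphi^n}\|=\|\varphi\|_\infty^n$, this forces $\|\varphi\|_\infty\le 1$.
\item Strong stability of $M_\varphi$, tested on the constant function $1$, gives $\|\varphi^n\|_2^2=\int_{\T^N}|\varphi|^{2n}\,d\zeta\to 0$.
\item On the set $\{|\varphi|=1\}$ the integrand equals $1$ for every $n$. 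Hence this set has measure zero, i.e.\ $|\varphi|<1$ almost everywhere.
\end{itemize}

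\emph{Proof of $ii)\Rightarrow i)$, in fact with a Parseval frame.} Assume $|\varphi|<1$ a.e.
\begin{itemize}
\item Then $\|\varphi\|_\infty\le 1$, so $M_\varphi^*$ is a contraction.
\item For any $f\in H^2(\T^N)$ we have $\|M_\varphi^n f\|_2^2=\int|\varphi|^{2n}|f|^2\,d\zeta$. The integrand tends to $0$ a.e. and is dominated by $|f|^2\in L^1$, so dominated convergence shows $M_\varphi$ is strongly stable.
\item Theorem \ref{thm:PF_orbits} then yields a Parseval frame of orbits of $M_\varphi^*$.
\end{itemize}

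\emph{Finiteness claim.} Assume, towards a contradiction, that $I$ is finite and the orbits of $M_\varphi^*$ form a frame. I split according to whether $\varphi$ is constant.
\begin{itemize}
\item If $\varphi\equiv c$, then $|c|<1$ by $ii)$. Here $M_\varphi^*=\bar c\,\mathrm{Id}$, so the span of the orbits has dimension at most $|I|<\infty$, which is impossible in $H^2(\T^N)$.
\item If $\varphi$ is nonconstant, apply Theorem \ref{finiteorbits-spec} with $T=M_\varphi^*$. It gives that the range of $T^*-\lambda=M_{\varphi-\lambda}$ has codimension zero for all $\lambda\in\D\setminus\Delta$, with $\Delta$ discrete in $\D$.
\item Take $\lambda=\varphi(z_0)$ for some $z_0\in\D^N$; this value lies in $\D$ since $|\varphi|<1$ on $\D^N$. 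Every $(\varphi-\lambda)g$ vanishes at $z_0$, so the range of $M_{\varphi-\lambda}$ lies in $\ker e_{z_0}=\{k_{z_0}\}^\perp$. That is a closed subspace of codimension one, so even the closure of the range has codimension at least one.
\item By the open mapping theorem (applied along a complex line in $\D^N$ on which $\varphi$ is nonconstant), $\varphi(\D^N)$ contains a nonempty open subset of $\D$. Such a set cannot be contained in the discrete set $\Delta$, giving the contradiction.
\end{itemize}

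The main obstacle I expect is reading the codimension statement in Theorem \ref{finiteorbits-spec} correctly, i.e.\ whether it refers to the range or to its closure. The argument above handles both readings, since the closure of the range of $M_{\varphi-\lambda}$ is still annihilated by $k_{z_0}$. If that reading nonetheless failed, I would fall back on the spectral alternative in the same theorem. Each $\overline{\varphi(z_0)}$ is an eigenvalue of $M_\varphi^*$ with eigenvector $k_{z_0}$, so $\sigma_p(M_\varphi^*)\supseteq\overline{\varphi(\D^N)}$, which is not discrete. It would then remain to rule out $\sigma_p(M_\varphi^*)=\D$, which requires separate work.
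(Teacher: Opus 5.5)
Your proof is correct. The equivalence $i)\Leftrightarrow ii)$ follows the paper: Theorem \ref{TadmitFrame}, strong stability of $M_\varphi$ tested on the constant $1$, and dominated convergence for the converse. Your power-boundedness step giving $\|\varphi\|_\infty\le 1$ is a small but welcome addition, since the paper's text only rules out the set $\{|\varphi|=1\}$ explicitly.

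The finiteness claim is where you take a different route. The paper uses the spectral dichotomy in Theorem \ref{finiteorbits-spec} for $T^*=M_\varphi$. It then cites two facts: $\sigma(M_\varphi)=\overline{\varphi(\D^N)}$, and $\sigma_p(M_\varphi)$ is empty or a singleton. You use the codimension clause instead, and you prove non-surjectivity of $M_\varphi-\varphi(z_0)$ directly via $k_{z_0}\perp\mathrm{ran}(M_\varphi-\varphi(z_0))$. This makes your argument self-contained: it avoids carrying the one-variable spectral theorem for analytic Toeplitz operators over to $\D^N$.

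Your separate treatment of constant $\varphi$ is genuinely needed. For $\varphi\equiv c$ with $|c|<1$, $\sigma(M_\varphi)=\{c\}$ is discrete in $\D$, and $M_\varphi-\lambda$ is onto for $\lambda\neq c$. So neither clause of Theorem \ref{finiteorbits-spec} gives a contradiction there. The paper's argument as written overlooks this (trivial) case, and your dimension count fills it.

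One correction to your fallback paragraph: the dichotomy concerns $T^*=M_\varphi$, not $M_\varphi^*$. Since $\sigma_p(M_\varphi)=\emptyset$ for nonconstant $\varphi$, the alternative $\sigma_p(T^*)=\D$ fails at once. That is exactly the paper's route, so no separate work is needed.
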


\begin{proof}
	By Theorem \ref{TadmitFrame}, statement i) is equivalent to saying that $M_{\varphi}^{*}$ is similar to a contraction and that $M_{\varphi}$ is strongly stable. The latter condition means that
	$$\|M_{\varphi}^n g \|^2=\int_{\T^N} |\varphi(\zeta)|^{2n} |g(\zeta)|^2 \,d\zeta \to 0$$
	as $n \to \infty$, for every $g\in H^2(\T^{N})$. In particular, taking $g \equiv 1$, the strong stability of $M_{\varphi}$ implies that $\int_{\T^N}|\varphi(\zeta)|^{2n} \,d\zeta \to 0$ as $n\to \infty$. Now, if $|\varphi(\zeta)|=1$ for every $\zeta$ in a set $E\subseteq \T^{N}$ of positive measure, then 
	$$\int_{\T^N}|\varphi(\zeta)|^{2n} \,d\zeta \geq \int_{E} |\varphi(\zeta)|^{2n} \,d\zeta = |E| > 0$$ 
	for all $n$, which contradicts the strong stability. Therefore, we must have $|\varphi(\zeta)|<1$ a.e.\ $\zeta\in \T^N$. This proves i) $\Rightarrow$ ii).
	
	The converse is a direct consequence of the Lebesgue Dominated Convergence Theorem.

	Finally, if $I$ were finite, Theorem \ref{finiteorbits-spec} would imply that either $\sigma(M_{\varphi}) = \overline{\D}$ with $\sigma_p(M_{\varphi})=\D$, or $\sigma(M_{\varphi})$ is discrete in $\D$. However, identifying again $H^2(\T^N)$ with $H^2(\D^N)$ via the analytic extension of $\varphi$, it is a well-known result for the analytic Toeplitz operator $M_\varphi$ that $\sigma(M_{\varphi}) = \overline{\varphi(\D^N)}$ 
    {(see \cite[Theorem 3.3.8]{MAR} for the one-variable case, the proof works for any number of variables)} and that $\sigma_{p}(M_{\varphi})$ is either empty or a singleton (if $\varphi$ is constant). This contradiction proves the result.
\end{proof}

\section{Hardy spaces of Dirichlet series}

In this section, we see that our previous results for $H^2(\mathbb{T}^N)$ ($N>1$) extend to  the infinite-dimensional torus $\mathbb{T}^\infty$ and, consequently, to the Hardy spaces of Dirichlet series $\mathcal H_2$. 
As for the $N$-dimensional case, we can endow the compact abelian group $\T^\infty$ with its normalized Haar measure, which can also be seen as the countable product  of the  normalized Lebesgue measure of $\T$. We keep the notation  $\zeta^{\alpha}$ for the monomial $\zeta_{1}^{\alpha_{1}}\cdots \zeta_n^{\alpha_n}\cdots$   (for   $\zeta\in \T^\infty$ and $\alpha\in  \Z^{(\N)}$ an eventually-zero multi-index).
With this notation,  \emph{Hardy space on the infinite torus} $H^2(\T^\infty)$ is defined as 
$$
H^2(\T^\infty) = \Big\{ f \in  L^2 (\T^\infty)\,\,\colon\,\, \widehat{f}(\alpha) = 0 \,  , \, \,\, \,
\forall \alpha \in \mathbb{Z}^{{(\N)}} \setminus \mathbb{N}_{0}^{{(\N)}} \Big\}.
$$
It is easy to check that our results from the previous section hold for the infinite dimensional case. For example, to see that there are no frames with finitely many generators both for the multiplication operator and for its adjoint in dimension greater than one (see  Proposition \ref{prop-no-finitos} and Theorem \ref{teo-phi1}) it was enough to consider the $N=2$ and to use Remark \ref{casoN>2}. This remark is clearly valid if we consider $N=\infty$.

Once our results are established for $H^2(\mathbb{T}^\infty)$, they   transfer to the corresponding  Hardy space of  Dirichlet series $\mathcal H_2$ via the so-called \emph{Bohr's transform}. 

First, let us recall that an \emph{ordinary Dirichlet series} is a function of a complex variable $s$ given by the formal sum
$$D(s) = \sum_{n=1}^\infty a_n n^{-s},$$
where $(a_n)_{n \geq 1}$ is a sequence of complex coefficients. The \emph{Hardy space of Dirichlet series $\mathcal{H}_2$} is defined as the Hilbert space of all such series with square-summable coefficients: $$\sum_{n=1}^\infty |a_n|^2 < \infty.$$ In the remarkable article \cite{hedenmalm1997}, the authors introduced the space  $\mathcal{H}_2$ to solve Beurling's problem on Riesz bases of dilations. They also showed that the space of multipliers of  $\mathcal H_2$ identifies with  $\mathcal{H}_\infty$, the space of Dirichlet series that define a bounded holomorphic function on $\C_+=\{s\in\C \colon \Re(s)>0\}$. Note that the product of formal Dirichlet series is also a Dirichlet series. We refer the reader to the just cited \cite{hedenmalm1997} and to the monographs \cite{DefGarMaeSev19,QueQue20} for details.

As we mentioned above, the connection between $H^2(\mathbb{T}^\infty)$ and the space $\mathcal H_2$ is given by Bohr's transform. Every integer $n \geq 1$ can be uniquely factored into prime numbers as $n = p_1^{\alpha_1} p_2^{\alpha_2} \cdots p_k^{\alpha_k}$, which we write $\mathbf p^\alpha$ for short. By identifying the $j$-th prime number $p_j$ with a complex coordinate variable $\zeta_j$ on the infinite torus, the Bohr's transform maps the term $n^{-s}$ to the multi-variable monomial $\mathbf{\zeta}^\alpha = \zeta_1^{\alpha_1} \zeta_2^{\alpha_2} \cdots \zeta_k^{\alpha_k}$, where $n=\mathbf p^\alpha$.

Consequently, the Dirichlet series $D(s)=\sum_{n=1}^\infty a_n n^{-s}$ is transformed into a Fourier series in infinitely many variables:
$${f}(\zeta) = \sum_{\alpha\in \mathbb N_0^{(\mathbb N)}} a_{\mathbf{p}^\alpha} \zeta^\alpha.$$
Note that,  in particular,  we have $a_1=\hat{f}(0)$. This transformation establishes an isometric isomorphism between the Hardy space of Dirichlet series $\mathcal{H}_2$ and the classical Hardy space on the infinite torus, $H^2(\mathbb{T}^\infty)$. A deeper result is that it is also an isometric isomorphism between $H^\infty(\T^\infty)$ and the space  $\mathcal H_\infty$. Moreover, Bohr's transform is multiplicative. Thanks to this identification and its properties, a Dirichlet series $D_0\in \mathcal H_\infty$ acting as a multiplication operator $M_{D_0}$ on $\mathcal H_2$ corresponds precisely to its image through Bohr's transform in $H^\infty(\mathbb T^\infty)$ acting on $H^2(\mathbb{T}^\infty)$. This allows us to fully translate our study of frames of orbits of multiplication operators to $\mathcal H_2$. Indeed, Theorem \ref{frame_varphi<=1} and Proposition \ref{casoN>2} readily provide the following. 

\begin{theorem}\label{frame_D_0}
Let $D_0 \in \mathcal H_\infty$. The following are equivalent. 
\begin{enumerate}
\item[i)] The operator $M_{D_0}$ admits a Parseval frame of $\mathcal H_2$;
\item[ii)] The operator $M_{D_0}$ admits a  frame of $\mathcal H_2$;
\item[iii)] The series satisfies $\|D_0\|_{\infty} \leq 1$ and $|a_1| < 1$, with $a_1$ the constant term of $D_0$.
\end{enumerate}
When these equivalent conditions hold, the frame necessarily has infinitely many orbits. 
\end{theorem}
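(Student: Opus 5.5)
The plan is to move everything to $H^2(\T^\infty)$ with Bohr's transform and then rerun the arguments of Section~2 there. Let $\mathcal B:\mathcal H_2\to H^2(\T^\infty)$ be Bohr's transform and put $\varphi=\mathcal B D_0\in H^\infty(\T^\infty)$. Because $\mathcal B$ is an isometric isomorphism and is multiplicative, we have $\mathcal B M_{D_0}\mathcal B^{-1}=M_\varphi$. Unitary equivalence carries orbits to orbits, frames to frames with the same bounds, and Parseval frames to Parseval frames. Also $\|D_0\|_\infty=\|\varphi\|_\infty$ and $a_1=\widehat\varphi(0)$. So the whole statement reduces to the corresponding statement for $M_\varphi$ on $H^2(\T^\infty)$.

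\textbf{Equivalence of i)--iii) on $\T^\infty$.} I would check that the proof of Theorem~\ref{frame_varphi<=1} goes through verbatim.

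For ii)$\Rightarrow$iii), the synthesis operator argument uses only the intertwining $M_\varphi^mR=RS^m$ and integration against the Haar measure on the torus. The set $E_\lambda\subseteq\T^\infty$ still has positive measure, and the argument again forces every $f\in H^2(\T^\infty)$ to vanish on $E_\lambda$, which is absurd. If $\|\varphi\|_\infty\le1$ and $|\widehat\varphi(0)|=1$, then $\int|\varphi-\widehat\varphi(0)|^2=\|\varphi\|_2^2-|\widehat\varphi(0)|^2\le0$, so $\varphi$ is a unimodular constant and $M_\varphi$ is unitary, which Remark~\ref{ss-unitarynoframe} rules out.

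For iii)$\Rightarrow$i), Theorems~\ref{TadmitFrame} and~\ref{thm:PF_orbits} reduce the problem to strong stability of $M_\varphi^*$. By Remark~\ref{ss-unitarynoframe}(1) it suffices to test monomials $\zeta^{\mathbf i}$ with $\mathbf i\in\N_0^{(\N)}$. The decomposition $\varphi=a_0+q+\eta$ is still legitimate: there are only finitely many $\mathbf j$ with $0<\mathbf j\le\mathbf i$, so $q$ is a polynomial, and the Riesz projection kills every product containing $\overline\eta$. This gives the same finite binomial bound, which tends to $0$ because $|a_0|<1$.

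\textbf{Infinitely many orbits.} Suppose finitely many $f_1,\dots,f_m$ generate a frame for $M_\varphi$ on $H^2(\T^\infty)$. Following Remark~\ref{casoN>2}, let $P_V$ be the orthogonal projection onto $V=\overline{\mathrm{span}}\{\zeta_1^k\zeta_2^l\}$, identified with $H^2(\T^2)$. The system $\{P_V(\varphi^nf_i)\}$ is then a frame for $V$.

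This is the step I expect to be the main obstacle: $P_V$ does not intertwine $M_\varphi$, so $\{P_V\varphi^nf_i\}$ is not obviously an orbit system. I would handle it in one of two ways.

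The first option is to rerun the kernel argument of Proposition~\ref{prop-no-finitos} directly on $\T^\infty$, without projecting to an orbit system. For points $z=(z_1,z_2,0,0,\dots)$, the reproducing kernel $k_z$ of $H^2(\T^\infty)$ lies in $V$. Lemma~\ref{Lem-frameykernel} then holds at such points with $\|k_z\|^2=\prod_{k=1}^2(1-|z_k|^2)^{-1}$, and only the values $f_i(z_1,z_2,0,\dots)=(P_Vf_i)(z_1,z_2)$ enter. So the inequalities of Proposition~\ref{prop-no-finitos} hold for the restricted symbol $\tilde\varphi(z_1,z_2)=\varphi(z_1,z_2,0,\dots)$ and the functions $P_Vf_i\in H^2(\T^2)$. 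Moreover $\|\tilde\varphi\|_\infty\le1$, since $\tilde\varphi$ is the $V$-section of $\varphi$ in the sense of conditional expectation.

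If $|\tilde\varphi|<1$ on a set of positive measure, the radial argument gives a contradiction exactly as before. Since the choice of the two coordinates is free, I would also pick them so that the restricted symbol is non-constant. Otherwise $\tilde\varphi$ is inner in two variables, and I would reproduce the second half of the proof of Proposition~\ref{prop-no-finitos}. The one-variable-dependence case is handled there by a dimension count, and that count survives the projection: for the subspace $W$ of functions of $\zeta_2$ alone, $P_W(\varphi^nf_i)$ depends only on $\zeta_2$ through evaluation at points $(0,z_2,0,\dots)$, where $\varphi^n$ is constant.

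The second option, if this bookkeeping fails, is to cite the paper's Proposition~\ref{casoN>2}. The paper states Theorem~\ref{frame_D_0} as a consequence of it, and it asserts exactly that the reduction of Remark~\ref{casoN>2} is valid for $N=\infty$.
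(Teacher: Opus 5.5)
Your proposal is correct and follows essentially the paper's route: Bohr's transform, Theorem~\ref{frame_varphi<=1} rerun on $\T^\infty$, and the reduction of Remark~\ref{casoN>2} to the two-variable case of Proposition~\ref{prop-no-finitos}. The obstacle you anticipate is not actually there. On analytic functions $P_V$ is restriction to the slice $z_3=z_4=\cdots=0$, hence multiplicative, so $P_V(\varphi^n f_i)=\tilde\varphi^{\,n}\,P_Vf_i$ is literally a finite-orbit frame of $M_{\tilde\varphi}$ on $V\cong H^2(\T^2)$, and Proposition~\ref{prop-no-finitos} applies as a black box for every symbol; this is exactly what your first option re-verifies by hand, so you also do not need to choose coordinates making $\tilde\varphi$ non-constant (which is not always possible, e.g.\ for $\varphi=\zeta_1\zeta_2\zeta_3$).
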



\begin{thebibliography}{99}

\bibitem{ACCP23}
\textsc{A. Aguilera, C. Cabrelli, D. Carbajal, and V. Paternostro},
\newblock {\em Frames by orbits of two operators that commute},
\newblock Appl. Comput. Harmon. Anal. 66, 46--61 (2023).

\bibitem{ACNP25}
\textsc{A. Aguilera, C. Cabrelli, F. Negreira, and V. Paternostro},
\newblock {\em Optimal dynamical frames},
\newblock to appear in J. Anal. Math. (2026).

\bibitem{ACCMP17}
\textsc{A. Aldroubi, C. Cabrelli, A. F. \c{C}akmak, U. Molter, and A. Petrosyan},
\newblock {\em Iterative actions of normal operators},
\newblock J. Funct. Anal. 272, 1121--1146 (2017).

\bibitem{ADK13}
\textsc{A. Aldroubi, J. Davis, and I. Krishtal},
\newblock {\em Dynamical sampling: time-space trade-off},
\newblock Appl. Comput. Harmon. Anal. 34, 495--503 (2013).

\bibitem{ADK15}
\textsc{A. Aldroubi, J. Davis, and I. Krishtal},
\newblock {\em Exact reconstruction of signals in evolutionary systems via spatiotemporal trade-off},
\newblock J. Fourier Anal. Appl. 21, 11--31 (2015).

\bibitem{ACMT}
\textsc{A. Aldroubi, C. Cabrelli, U. Molter, and S. Tang},
\newblock {\em Dynamical sampling},
\newblock Appl. Comput. Harmon. Anal. 42, 378--401 (2017).

\bibitem{AP17}
\textsc{A. Aldroubi and A. Petrosyan},
\newblock {\em Dynamical sampling and systems from iterative actions of operators},
\newblock in: I. Pesenson, Q. Le Gia, A. Mayeli, H. Mhaskar, D. X. Zhou (eds.), Frames and Other Bases in Abstract and Function Spaces. Applied and Numerical Harmonic Analysis. Birkh\"auser, Cham, 2017.

\bibitem{CMPP}
\textsc{C. Cabrelli, U. Molter, V. Paternostro, and F. Philipp},
\newblock {\em Dynamical sampling on finite index sets},
\newblock J. Anal. Math. 140, 637--667 (2020).

\bibitem{CMS21}
\textsc{C. Cabrelli, U. Molter, and D. Su\'arez},
\newblock {\em Multi-orbital frames through model spaces},
\newblock Comp. Anal. Oper. Theory 15, 1--22 (2021).

\bibitem{CMS}
\textsc{C. Cabrelli, U. Molter, and D. Su\'arez},
\newblock {\em Frames of iterations and vector-valued model spaces},
\newblock in: S. D. Casey, M. M. Dodson, P. J. S. G. Ferreira, A. Zayed (eds.), Sampling, Approximation, and Signal Analysis. Applied and Numerical Harmonic Analysis. Birkh\"auser, Cham, 2023. 

\bibitem{Ches}
\textsc{J. Cheshmavar}
\newblock{Frame properties induced by iteration of a multiplication operator on Hardy spaces}
\url{https://arxiv.org/abs/2509.04879}

\bibitem{CH23}
\textsc{O. Christensen and M. Hasannasab},
\newblock {\em A survey on frame representations and operator orbits},
\newblock in: S. D. Casey, M. M. Dodson, P. J. S. G. Ferreira, A. Zayed (eds.), Sampling, Approximation, and Signal Analysis. Applied and Numerical Harmonic Analysis. Birkh\"auser, Cham, 2023.

\bibitem{CHP}
\textsc{O. Christensen, M. Hasannasab, and F. Philipp},
\newblock {\em Frame properties of operator orbits},
\newblock Math. Nachr. 293, 52--66 (2019).

\bibitem{DefGarMaeSev19}
\textsc{A. Defant, D. Garc\'{\i}a, M. Maestre, and P. Sevilla-Peris},
\newblock {\em Dirichlet Series and Holomorphic Functions in High Dimensions (New Mathematical Monographs)},
\newblock Cambridge University Press, 2019.

\bibitem{GMR}
\textsc{R. S. Garcia, J. Mashreghi, and W. T. Ross},
\newblock {\em Introduction to model spaces and their operators},
\newblock Cambridge Studies in Advanced Mathematics, 148. Cambridge University Press, Cambridge, 2016.

\bibitem{hedenmalm1997}
\textsc{H. Hedenmalm, P. Lindqvist, and K. Seip},
\newblock {\em A {H}ilbert space of {D}irichlet series and systems of dilated functions in ${L}_2(0,1)$},
\newblock Duke Math. J. 86(1), 1--37 (1997).

\bibitem{KasShu19}
\textsc{Z. A. Kasumov and A. S. Shukurov},
\newblock {\em On frame properties of iterates of a multiplication operator},
\newblock Result. Math. 74(2), Paper No. 84, 8 pp. (2019).

\bibitem{Mar15}
\textsc{M. Markovi\'c},
\newblock {\em Sharp inequalities over the unit polydisc},
\newblock J. Funct. Anal. 268, 2647--2671 (2015).

\bibitem{MAR}
\textsc{R. A. Mart\'inez-Avenda\~no and P. Rosenthal},
\newblock {\em An Introduction to Operators on the Hardy-Hilbert Space},
\newblock Springer New York, NY, 2006.

\bibitem{Nik2019}
\textsc{N. Nikolski},
\newblock {\em Hardy Spaces},
\newblock D. Gibbons and G. Gibbons (trans.), Cambridge University Press, 2019.

\bibitem{Paul}
\textsc{V. I. Paulsen and M. Raghupathi},
\newblock {\em An introduction to the theory of reproducing kernel Hilbert spaces},
\newblock Cambridge University Press.

\bibitem{QueQue20}
\textsc{H. Queff\'elec and M. Queff\'elec},
\newblock {\em Diophantine approximation and Dirichlet series},
\newblock Texts and Readings in Mathematics, vol. 80. Hindustan Book Agency, New Delhi; Springer, Singapore, 2nd edition, 2020.

\bibitem{Rud}
\textsc{W. Rudin},
\newblock {\em Function theory in polydiscs},
\newblock New York, NY, 1969.

\bibitem{Ste}
\textsc{M. Stessin},
\newblock {\em Wold decomposition of the Hardy space and Blaschke products similar to a contraction},
\newblock Colloq. Math. 81(2), 271--284 (1999).

\end{thebibliography}
\end{document}